\documentclass[11pt]{article}

\usepackage[nocompress]{cite}
\usepackage{graphicx}
\usepackage[labelfont=bf]{caption}
\usepackage{enumitem}
\usepackage{tikz}
\usetikzlibrary{shapes, arrows.meta, positioning, fit, backgrounds, calc}
\usetikzlibrary{shapes.geometric, arrows.meta, positioning, calc, shadows}
\definecolor{myblue}{RGB}{70, 130, 180}    
\definecolor{myred}{RGB}{205, 92, 92}      
\definecolor{mygray}{RGB}{105, 105, 105}   
\definecolor{mypurple}{RGB}{153, 102, 204} 

\usepackage{amsthm}
 \usepackage{amsmath}
\usepackage{amsfonts}
\usepackage{amssymb, amsfonts, amsmath, nicefrac}
\usepackage{color}
\usepackage{mathtools}

\usepackage{hyperref}
\hypersetup{
    colorlinks=true,
    linkcolor=blue,
    anchorcolor=blue,
    citecolor=blue,
}
\usepackage[capitalise]{cleveref}

\newtheorem{theorem}{Theorem}[section]
\newtheorem{lemma}{Lemma}[section]
\newtheorem{proposition}{Proposition}[section]
\newtheorem{example}{Example}
\newtheorem{remark}{Remark}
\newtheorem{definition}{Definition}[section]
\newtheorem{assumption}{Assumption}[section]

\crefdefaultlabelformat{#2\textup{#1}#3}
\Crefname{assumption}{Assumption}{Assumptions}
\crefname{assumption}{Assumption}{Assumptions}
\Crefrangeformat{assumption}{Assumptions #3\textup{#1}#4--#5\textup{#2}#6}
\setlist[enumerate]{label=\textup{\arabic*.}, nosep}

\newcommand{\sff}{\mathsf{f}}
\def\argmin{\mathop{\rm arg\,min}}
\def\argmax{\mathop{\rm arg\,max}}
\newcommand{\U}{\mathcal{U}}
\newcommand{\HH}{\mathcal{H}}
\newcommand\tsum{\textstyle\sum\nolimits}
\newcommand{\abs}[1]{\left|#1\right|}
\newcommand{\N}{\mathcal{N}}
\newcommand{\dist}{\mathrm{dist}}
\newcommand{\e}{\varepsilon}

\newcommand{\bbr}{\mathbb{R}}
\newcommand{\bbe}{\mathbb{E}}
\newcommand{\bbp}{\mathbb{P}}

\begin{document}

\title{\bf Stochastic Optimal Control with Side Information and Bayesian Learning}
\author{{\bf Johannes Milz}\thanks{Georgia Institute of Technology, Atlanta, Georgia
30332, USA (\texttt{johannes.milz@isye.gatech.edu}).  The research of this
author was partially supported by the National Science Foundation under Award
No.\ DMS-2410944.}
\and 
{\bf Alexander Shapiro}\thanks{Georgia Institute of Technology, Atlanta, Georgia
30332, USA ({\texttt{ashapiro@isye.gatech.edu}}). 
The research of this
author was partially supported by Air Force Office of Scientific Research (AFOSR)
under Grant    FA9550-25-1-0310.}
\and {\bf Enlu Zhou}\thanks{Georgia Institute of Technology, Atlanta, Georgia
30332, USA (\texttt{enlu.zhou@isye.gatech.edu}). The research of this author was partially supported by Air Force Office of Scientific Research (AFOSR)
under Grant    FA9550-25-1-0310 and the National Science Foundation under Award ECCS-2419562.} 
}

\date{July 20, 2026}

\maketitle

\abstract{%
We study infinite-horizon stochastic optimal control problems with observable
side information: a  Markov chain that modulates an unknown
context-conditional randomness distribution. 
Since this distribution is
unknown, we propose a Bayesian reformulation based on a parametric density
model and posterior predictive dynamics, which yields a Bayesian Bellman
equation. We prove posterior consistency under
Markov samples and, under correct specification and identifiability, uniform
convergence of the Bayesian value function.
Finally, we establish Bernstein--von Mises-type asymptotic normality for the
data-driven contextual optimal value.%
}

\section{Introduction}

Contextual optimization addresses decision making under uncertainty with side information (also known as context or covariates) available at the time of decision. In its static (one‑stage) form, the decision maker observes a context and then chooses an action that optimizes the conditional expected performance. To learn the optimal decision rule that maps from an observed context to the optimal action, there are several 
paradigms, including decision rule optimization,  sequential learning and optimization, and integrated learning and optimization (see the recent paper \cite{Sadana2025ContextualOptimizationSurvey} for a comprehensive survey).

Contextual optimization has also been studied in multi‑stage settings where a state evolves over time under a policy and the context may shape rewards and transition kernels from stage to stage. Several existing models instantiate dynamic contextual  optimization. In the contextual multi‑armed bandit, the learner repeatedly observes a context and selects an arm, receiving bandit feedback while seeking low regret (see, e.g., \cite{LangfordZhang2007}). In contextual Markov decision processes (CMDPs), an episode is governed by an observed, typically static context that parameterizes both transitions and rewards; foundational work established this model and studied planning/learning guarantees, while subsequent results provided regret and sample‑complexity bounds under realizability assumptions and oracle‑based regression \cite{Hallak2015CMDP,LevyMansour2023AAAI,DengEtAl2024AISTATS}. When the context evolves within an episode, Dynamic Contextual MDPs (DCMDPs) allow history‑dependent or time‑varying contexts and analyze the algorithm's regret under additional structure (e.g., logistic DCMDPs) \cite{TennenholtzEtAl2023ICML}. A closely related line of research models the context as a finite‑state Markov chain (often called a regime or mode), yielding Markov‑jump/Markov‑switching systems (MJLS) in which mode‑dependent dynamics are controlled via dynamic programming or coupled Riccati equations; classical treatments typically assume the mode transition matrix and noise statistics are both known rather than learned from data (see, e.g., \cite{Costa2005,GuptaMurrayMJLSNotes,Geromel2023MJLS}).
A related stream studies multi-period inventory control under Markov-modulated demand, 
with known demand distribution driven by a Markov chain \cite{Chen2001,Song1993,Arifouglu2004,Arifouglu2010,Chen2024,Malladi2023,Sethi1997}. 

In this paper, we introduce a contextual stochastic optimal control problem, in which (i) the context is observable and evolves according to a Markov chain with known transition probabilities, and (ii) the distribution of the exogenous randomness, conditional on the context, is unknown and must be learned from sequential data. To estimate this conditional distribution we posit a regression model and learn its parameters with a Bayesian approach, and then solve a Bayesian average estimate of the original control problem. 

Compared to  CMDPs and DCMDPs, our formulation departs by imposing Markovian context dynamics (rather than per-episode static or history-dependent context) and our approach hinges on Bayesian learning of the conditional distribution.  Unlike classical MJLS, which presumes known randomness distributions, we learn the context‑conditional randomness distribution from data and integrate Bayesian learning into decision making. We also distinguish our problem formulation from classical adaptive control or reinforcement learning, where control and learning interact with each other and are tightly coupled. In our formulation, the randomness is exogenous,  and realizations of the randomness are directly observable. Therefore, at each stage, we first estimate the randomness distribution and then solve the estimated problem. Conceptually, our approach can be viewed as an extension of the episodic Bayesian optimal control approach in \cite{SZLW2025}  to the new problem of contextual stochastic control; because of the different problem settings, new challenges arise from learning of the context-dependent randomness distribution and the corresponding analysis. These challenges are not trivial and require a considerably deeper technical analysis.

We summarize our contributions as follows:
\begin{itemize}
    \item We introduce a new model for stochastic optimal control with side information (i.e., observed context) and unknown randomness distribution. It provides a modeling framework for a wide range of applications 
         For example, it can address portfolio optimization with side information such as time-varying economic factors.  As the topic of  context-dependent randomness in the {\em static} setting is popular now with many recent publications,   the current paper extends this to the dynamic framework. 
    \item We propose a reformulation of the original problem by incorporating Bayesian learning of the context-conditional randomness  distribution. The Bayesian learning approach sequentially updates the posterior distribution with observed context-randomness data, and the Bayesian average estimate of the cost function serves as a surrogate of the original (unknown) objective.
    \item We show that the optimal value functions of the reformulated problem converge uniformly to the true value function as the data size increases. We also develop a  Bernstein--von Mises-type asymptotic normality result for the data-driven contextual optimal value.
\end{itemize}

\section{Problem Statement}

We consider the  (discrete time, infinite
horizon)  Stochastic Optimal Control  (SOC) model:
\begin{equation}\label{soc-inf}
\min\limits_{\pi\in \Pi}  \bbe^\pi\big [ \tsum_{t=1}^{\infty}
\gamma^{t-1} c(x_t,u_t,\xi_t)
\big],
\end{equation}
where $\gamma\in (0,1)$ is the discount factor and $\Pi$ is the set of policies
given by
\begin{equation}\label{soc-inf-b}
\Pi=\Big\{\pi=(\pi_1,\pi_2,\ldots):
u_t=\pi_t(x_t,\eta_{[t]},\xi_{[t-1]}),
u_t\in \U, x_{t+1}=F(x_t,u_t,\xi_t),\;\;t=1,2,\ldots\Big\}.
\end{equation}
Here variables  $x_t\in \bbr^{n}$, $t=1,2,\ldots$, represent the state  of the
system with state space $\mathcal{X} \subset \mathbb{R}^n$,  
 $u_t\in \bbr^{m}$,  $t=1,2,\ldots$, are controls,   $\xi_t\in \Xi$,
$t=1,2,\ldots$, are random vectors, and $\eta_t\in \HH$, $t=1,2,\ldots$, are
observable context variables taking values in a {\em  finite} set $\HH$.
The set $\Xi$ is a closed subset of $\bbr^{d}$, the one-stage cost function
$c: \bbr^n\times\bbr^{m}\times\bbr^{d}\to \bbr$ is measurable, and
$F: \bbr^n \times\bbr^{m}\times\bbr^{d}\to \bbr^n$ is a measurable mapping,
with $F(\mathcal{X} \times \mathcal{U} \times \Xi) \subset \mathcal{X}$.
The control set $\U$ is a nonempty subset of $\bbr^{m}$.

We  assume that   $x_1$, $\xi_0$,   and $\eta_1$ are deterministic, 
and that 
the cost function $c \colon \mathcal{X} \times \mathcal{U} \times \Xi \to \mathbb{R}$ is bounded.
Moreover, we assume that the probability law of the random
process $\{(\xi_t,\eta_t)\}_{t\ge 1}$ does not depend on the decisions.
The optimization in \eqref{soc-inf} is performed over policies $\pi\in \Pi$
determined by decisions $u_t$ and state variables $x_t$, which are functions
of the context histories $\eta_{[t]}\coloneqq   (\eta_1,\ldots,\eta_t)$ and randomness histories
$\xi_{[t-1]}\coloneqq   (\xi_0,\ldots,\xi_{t-1})$ and satisfy the
feasibility constraints \eqref{soc-inf-b}.

We assume that a decision maker does not have access to  the distribution of $\xi$ conditional on the context $\eta$, but can directly observe data $\{(\xi_i,\eta_i), i=1,\ldots,N\}$. This assumption arises naturally in many practical applications. For example, in inventory control, customer demand is influenced by seasonal factors and can be directly observed from sales data; in portfolio optimization, asset prices depend on underlying economic factors and are readily observable; and in queueing or service systems, arrival rates and service times may vary with time-related factors and are often directly measurable.

\paragraph{Markovian contextual dynamics and policy simplification.}
We assume that $\{\eta_t\}_{t\ge 1}$ is a time-homogeneous Markov chain on a finite
state space $\HH$
with \emph{known} transition probabilities
\begin{equation}\label{mchain}
 \varpi_{h,h'}\coloneqq  \bbp(\eta_{t+1}=h'| \eta_t=h),\qquad h,h'\in \HH.
\end{equation}
Moreover, conditional on $\eta_t$, the random vector $\xi_t$ has \emph{unknown} density
$q(\cdot| \eta_t)$ and is conditionally independent of the past.
In addition, $\eta_{t+1}$ conditional on $\eta_t$ is independent of $\xi_{[t]}$.
Consequently, the augmented state $z_t\coloneqq  (x_t,\eta_t)$ defines a controlled
Markov process, and \eqref{soc-inf} can be viewed as an infinite-horizon MDP
with state $z_t$ and action $u_t$.

In general, an admissible policy may depend on the entire observed history,
that is, $u_t=\pi_t(x_t,\eta_{[t]},\xi_{[t-1]})$.
Under the above Markovian assumptions, $(x_t,\eta_t)$ is a sufficient state,
and hence there is no loss of optimality in restricting attention to policies
of the form $u_t=\pi_t(x_t,\eta_t,\xi_{[t-1]})$.
Moreover, since $\xi_t$ is conditionally independent of the past given $\eta_t$,
one can further restrict to Markov policies $u_t=\pi_t(x_t,\eta_t)$, which is
the policy class used throughout the paper.
We illustrate the problem formulation in  \Cref{fig:data-process-illustration}.

The restriction to Markovian policies can also be justfied by reformulating the contextual SOC model as a Markov Decision 
Process (MDP)
with state $(x,\eta)$, where the corresponding transition kernel is induced by
the state dynamics, the Markov transition law of $\eta$, and the
conditional distribution $\xi \sim q(\cdot | \eta)$. Hence, by
standard MDP theory, optimization over admissible history-dependent
policies can be restricted, without loss of generality, to Markov
policies.

Restricting to the Markov policies, 
the Bellman equation of \eqref{soc-inf} is  
\begin{equation}\label{bell-inf}
V(x,\eta)=\inf_{u\in\U}\;
\bbe_{\xi\sim q(\cdot| \eta)}
\Big[
c(x,u,\xi)+\gamma\,
\bbe_{\eta'|\eta}
\big[V(F(x,u,\xi),\eta')\big]
\Big],
\end{equation}
where $\eta'$ denotes the next context state with distribution
$\bbp(\eta'=\bar\eta| \eta)=\varpi_{\eta,\bar\eta}$
and the conditional expectation
\begin{equation}\label{exp}
\bbe_{\eta'|\eta}\big[V(F(x,u,\xi),\eta')\big]=
\sum_{\eta'\in \HH} \varpi_{\eta, \eta'}V(F(x,u,\xi),\eta'). 
\end{equation}
Again by the general theory of MDPs equation \eqref{bell-inf} has unique solution $V^*(x,\eta)$. 
Under the specified assumptions, the associated Bellman operator is a
$\gamma$-contraction under the uniform norm on bounded functions on
$\mathcal{X}\times\mathcal{H}$, and hence \eqref{bell-inf} admits a unique
bounded solution $V^*$, which is the optimal value function of
\eqref{soc-inf}.

\begin{figure}[t]
\centering
\begin{tikzpicture}[scale=0.8, transform shape,
    >={Latex[length=2.5mm]},
    node distance=1.5cm,
    semithick,
    font=\small,
    var_node/.style={circle, draw=black, thick, minimum size=0.7cm, inner sep=1pt},
    context/.style={var_node, draw=myred, fill=myred!10},
    control/.style={var_node, draw=mypurple, fill=mypurple!10, rectangle, rounded corners},
    noise/.style={var_node, draw=mygray, fill=mygray!10, dashed},
    state/.style={var_node, draw=myblue, fill=myblue!10},
    link/.style={->, thick, gray!80},
    markov_link/.style={->, thick, myred},
    dynamics_link/.style={->, thick, myblue},
    policy_link/.style={->, thick, mypurple!80!black}
]

    \def\yEta{2.4}
    \def\yXi{1.6}
    \def\yU{0.8}
    \def\yX{0.0}

    \def\xZero{0.4}       
    
    \def\xOneStart{2.0}   
    \def\xOneDec{3.0}     
    \def\xOneEnd{5.4}     
    
    \def\xTwoStart{7.0}   
    \def\xTwoDec{8.0}     
    \def\xTwoEnd{10.4}    
    
    \def\xThreeStart{12.0} 
    \def\xThreeDec{13.0}  
    \def\xThreeEnd{15.4}  
    
    \def\xInf{16.8}


    \node[noise] (xi0) at (\xZero, \yXi) {$\xi_0$};

    \node[context] (eta1) at (\xOneStart, \yEta) {$\eta_1$};
    \node[state] (x1) at (\xOneStart, \yX) {$x_1$};
    \node[control] (u1) at (\xOneDec, \yU) {$u_1$};
    \node[below=0.1cm of u1, xshift=0.5cm, font=\footnotesize, text=mypurple] {$u_1=\pi(x_1,\eta_1)$};
    \node[noise] (xi1) at (\xOneEnd, \yXi) {$\xi_1$};

    \node[context] (eta2) at (\xTwoStart, \yEta) {$\eta_2$};
    \node[state] (x2) at (\xTwoStart, \yX) {$x_2$};
    \node[below=0.1cm of x2, xshift=0.5cm, font=\footnotesize, text=myblue, align=center] {$x_{2}=F(x_1,u_1,\xi_1)$};
    \node[control] (u2) at (\xTwoDec, \yU) {$u_2$};
    \node[below=0.1cm of u2, xshift=0.5cm, font=\footnotesize, text=mypurple] {$u_2=\pi(x_2,\eta_2)$};
    \node[noise] (xi2) at (\xTwoEnd, \yXi) {$\xi_2$};

    \node[context] (eta3) at (\xThreeStart, \yEta) {$\eta_3$};
    \node[state] (x3) at (\xThreeStart, \yX) {$x_3$};
    \node[below=0.1cm of x3, xshift=0.5cm, font=\footnotesize, text=myblue, align=center] {$x_{3}=F(x_2,u_2,\xi_2)$};
    \node[control] (u3) at (\xThreeDec, \yU) {$u_3$};
    \node[below=0.1cm of u3, xshift=0.5cm, font=\footnotesize, text=mypurple] {$u_3=\pi(x_3,\eta_3)$};
    \node[noise] (xi3) at (\xThreeEnd, \yXi) {$\xi_3$};

    \node (etaInf) at (\xInf, \yEta) {$\dots$};
    \node (xiInf) at (\xInf, \yXi) {$\dots$};
    \node (uInf) at (\xInf, \yU) {$\dots$};
    \node (xInf) at (\xInf, \yX) {$\dots$};


    \draw[markov_link] (eta1) -- node[midway, above, font=\footnotesize, text=myred] {$\varpi_{\eta_1,\eta_2}$} (eta2);
    \draw[markov_link] (eta2) -- node[midway, above, font=\footnotesize, text=myred] {$\varpi_{\eta_2,\eta_3}$} (eta3);
    \draw[markov_link] (eta3) -- node[midway, above, font=\footnotesize, text=myred] {$\varpi_{\eta_3,\dots}$} (etaInf);

    \draw[link, myblue] (x1) -- (u1); \draw[link, myred] (eta1) -- (u1);
    \draw[link, myblue] (x2) -- (u2); \draw[link, myred] (eta2) -- (u2);
    \draw[link, myblue] (x3) -- (u3); \draw[link, myred] (eta3) -- (u3);

    \draw[link, dashed, myred] (eta1) -- node[midway, below left, font=\scriptsize, inner sep=1pt] {$q(\cdot|\eta_1)$} (xi1);
    \draw[link, dashed, myred] (eta2) -- node[midway, below left, font=\scriptsize, inner sep=1pt] {$q(\cdot|\eta_2)$} (xi2);
    \draw[link, dashed, myred] (eta3) -- node[midway, below left, font=\scriptsize, inner sep=1pt] {$q(\cdot|\eta_3)$} (xi3);

    \draw[dynamics_link] (x1) to[out=-20, in=200] (x2);
    \draw[link, mypurple] (u1.east) -- (x2);
    \draw[link, mygray] (xi1) -- (x2);

    \draw[dynamics_link] (x2) to[out=-20, in=200] (x3);
    \draw[link, mypurple] (u2.east) -- (x3);
    \draw[link, mygray] (xi2) -- (x3);

    \draw[dynamics_link] (x3) to[out=-20, in=200] (xInf);
    \draw[link, mypurple] (u3.east) -- (xInf);
    \draw[link, mygray] (xi3) -- (xInf);


    \draw[dashed, gray!30] (1.0, \yEta+0.4) -- (1.0, \yX-0.9);
    \draw[dashed, gray!30] (6.0, \yEta+0.4) -- (6.0, \yX-0.9);
    \draw[dashed, gray!30] (11.0, \yEta+0.4) -- (11.0, \yX-0.9);
    \draw[dashed, gray!30] (16.0, \yEta+0.4) -- (16.0, \yX-0.9);

    \node[anchor=east, font=\footnotesize, myred] at (-0.1, \yEta) {Context};
    \node[anchor=east, font=\footnotesize, mygray] at (-0.1, \yXi) {Randomness};
    \node[anchor=east, font=\footnotesize, mypurple] at (-0.1, \yU) {Decision};
    \node[anchor=east, font=\footnotesize, myblue] at (-0.1, \yX) {State};

    \node[font=\bfseries] at (\xZero, \yX-1.3) {$t=0$};
    \node[font=\bfseries] at (\xOneStart+1.5, \yX-1.3) {$t=1$};
    \node[font=\bfseries] at (\xTwoStart+1.5, \yX-1.3) {$t=2$};
    \node[font=\bfseries] at (\xThreeStart+1.5, \yX-1.3) {$t=3$};
    \node[font=\bfseries] at (\xInf, \yX-1.3) {$\ldots$};

\end{tikzpicture}
\caption{Timeline of the data (context and randomness) process, 
system dynamics,  and control processes. The context $\eta_t$ evolves according to 
the known transition probability $\varpi_{\eta_t, \eta_{t+1}}$, and generates the randomness $\xi_t$ via the unknown
density
$q(\cdot|\eta_t)$. The action $u_t = \pi(x_t, \eta_t)$ is chosen based on state and context, driving the system dynamics $x_{t+1}=F(x_t, u_t, \xi_t)$.}
\label{fig:data-process-illustration}
\end{figure}

\begin{example}
Consider the stationary inventory model (e.g., \cite{zipkin})
\begin{equation}\label{inv-1}
\min\limits_{u_t\ge 0}  \bbe \big [ \tsum_{t=1}^{\infty}
\gamma^{t-1}  \big(cu_t+\psi(x_t+u_t,D_t)\big)
\big] \quad {\rm s.t.} \quad x_{t+1}=x_t+u_t-D_t,
\end{equation}
where
$
\psi(x,d)\coloneqq b[d-x]_+ + h[x-d]_+
$,
and 
the cost parameters satisfy $b>c\ge 0$, $h\ge 0$, and
$c+h>0$.
Moreover, conditional on $\eta_t=\eta$, demand satisfies
$D_t\sim q(\cdot|\eta)$, is nonnegative, has finite first moment, 
and is conditionally independent of $\eta_{t+1}$.

In the present case 
the Bellman equation  \eqref{bell-inf} takes the form 
\begin{equation*}
V(x,\eta)=\inf_{u\ge 0}\;
\bbe_{D\sim q(\cdot| \eta)}
\Big[
cu+\psi(x+u,D)+\gamma\,
\bbe_{\eta'|\eta}
\big[V(x+u-D,\eta')\big]
\Big].
\end{equation*}
By the change of variables $y=x+u$ we can write this
equation as
\begin{equation}\label{inv-3}
V(x,\eta)=-cx +\inf_{y\ge x}\;
\bbe_{D\sim q(\cdot| \eta)}
\Big[
cy+\psi(y,D)+\gamma\,
\bbe_{\eta'|\eta}
\big[V(y-D,\eta')\big]
\Big].
\end{equation}

Consider the solution  $V^*(x,\eta)$ of the Bellman equation.   
Note that for every $\eta\in \HH$, the value function $V(\cdot,\eta)$ is convex.
Therefore the minimizer $\bar{y}_\eta$ in the right hand side of \eqref{inv-3} is given by $\bar{y}_\eta=\max\{y^*_\eta,x\}$, where $y^*_\eta$ is the unconstrained minimizer
 \begin{equation}\label{inv-5}
y^*_\eta\in  \argmin_{y} \bbe_{D\sim q(\cdot| \eta)}
\Big[
cy+\psi(y,D)+\gamma\,
\bbe_{\eta'|\eta}
\big[V^*(y-D,\eta')\big]
\Big]. 
\end{equation}
Therefore the optimal policy is the  context-dependent
base-stock policy  
\begin{equation*}
\pi^*(x,\eta)=\max\{y^*_\eta,x\}-x=[y_\eta^*-x]_+. 
\end{equation*}
\end{example}

\paragraph{Bayesian reformulation.}
Since the conditional density \(q(\xi| \eta)\) is unknown, the Bellman equation \eqref{bell-inf} is not available and needs to be estimated. To this end, we take a Bayesian approach. Suppose that the conditional density \(q(\xi| \eta)\) is modeled by a parametric density family \(\{f(\xi| \eta,\theta):\theta\in\Theta\}\) with unknown
parameter \(\theta\), where \(\Theta\subset\mathbb{R}^{s}\).
Let \(p(\theta)\) be a prior density on \(\Theta\).

We observe data $\eta_1,\ldots,\eta_N\in\mathcal{H}$ and
$\xi_1,\ldots,\xi_N$, where $\xi_i$ is generated according to the conditional density $q(\cdot| \eta_i)$ and is conditionally independent of $\xi_{[i-1]}$ given $\{\eta_i \}_{i=1}^N$. 
Moreover, $\eta_1$ is deterministic. 
Given data $(\xi_i,\eta_i)$, $i=1,\ldots,N$, the posterior density is
\begin{equation}\label{post-bayes-inf}
\mathsf{p}_N(\theta)=
\frac{\sff_{N}(\theta)p(\theta)}
{\int_{\Theta}\sff_{N}(\vartheta)p(\vartheta)\,\mathrm{d}\vartheta},
\qquad
\text{where}
\qquad 
\sff_{N}(\theta)\coloneqq  
f(\xi_{1}| \eta_{1},\theta)
\prod_{i=2}^{N} \big[ f(\xi_i| \eta_i,\theta)\
\varpi_{\eta_{i-1},\eta_i}
\big].
\end{equation}
For a bounded random variable $Y$,
its posterior predictive conditional expectation is defined by
\begin{equation}\label{bell-3}
\bbe_{\mathsf{p}_N| \eta}[Y]
\coloneqq  
\bbe_{\theta\sim \mathsf{p}_N}\Big[\bbe_{\xi\sim f(\cdot| \eta,\theta)}[Y]\Big]
=
\int_{\Theta}\int_\Xi Y(\xi)\,f(\xi| \eta,\theta)\,\mathsf{p}_N(\theta)\,\mathrm{d}\xi\,\mathrm{d}\theta.
\end{equation}
The corresponding Bayesian value function $V_N^*$ on $\mathcal{X} \times \mathcal{H}$
 is defined as the optimal
value 
that satisfies the Bayesian Bellman equation
\begin{equation}\label{bell-bayes-inf}
V_N^*(x,\eta)=\inf_{u\in\U}\;
\bbe_{\mathsf{p}_N| \eta}\Big[
c(x,u,\xi)+\gamma\,
\bbe_{\eta'|\eta}
\big[V_N^*(F(x,u,\xi),\eta')\big]
\Big].
\end{equation}
Compared with the Bellman equation in \eqref{bell-inf}, the
unknown context-conditional disturbance law is replaced by its posterior
predictive counterpart.
By the same contraction argument as above, for each $N\in\mathbb{N}$,
\eqref{bell-bayes-inf} admits a unique bounded solution $V_N^*$.
If it exists, a
Bayesian optimal (stationary Markov) policy is given by a  measurable
 selector  
\begin{equation}\label{optsol}
  \pi_N^*(x,\eta)\in\argmin_{u\in\U}  
\bbe_{\mathsf{p}_N| \eta}\Big[
c(x,u,\xi)+\gamma\,
\bbe_{\eta'|\eta}
\big[V_N^*(F(x,u,\xi),\eta')\big]
\Big].
\end{equation}

\paragraph{Sequential estimate-then-control process.}
After observing the exogenous data stream up to time \(N\), namely
\(\{(\xi_i,\eta_i)\}_{i=1}^N\), the posterior \(\mathsf{p}_N\) is
constructed according to \eqref{post-bayes-inf}. This posterior defines
the posterior predictive expectation in \eqref{bell-3} and hence the
Bayesian Bellman equation \eqref{bell-bayes-inf}, whose solution gives
\(V_N^*\) and the policy \(\pi_N^*\) in \eqref{optsol}. With
\(\mathsf{p}_N\) fixed, \eqref{bell-bayes-inf} is an infinite-horizon
control problem over stages \(t=1,2,\ldots\). When a new observation
\((\xi_{N+1},\eta_{N+1})\) arrives, the same construction is
repeated with \(\mathsf{p}_{N+1}\).
We illustrate this  scheme in \Cref{fig:bayesian}.

\begin{figure}[t]
\centering
\begin{tikzpicture}[scale=0.8, transform shape,
    font=\small,
    >=Latex,
    node distance=0.65cm,
    define color/.code={
        \definecolor{myblue}{RGB}{70, 130, 180}    
        \definecolor{myred}{RGB}{205, 92, 92}      
        \definecolor{mygray}{RGB}{105, 105, 105}   
        \definecolor{mypurple}{RGB}{153, 102, 204} 
        \definecolor{myorange}{RGB}{218, 135, 56}  
    },
    define color,
    database/.style={
        cylinder,
        shape border rotate=90,
        aspect=0.25,
        draw=myred,
        thick,
        fill=myred!10,
        minimum width=2.2cm,
        minimum height=1.8cm,
        align=center,
    },
    posterior/.style={
        rectangle,
        rounded corners=2mm,
        draw=mygray,
        thick,
        fill=mygray!10,
        minimum width=2.2cm,
        minimum height=1.5cm,
        align=center,
    },
    value/.style={
        rectangle,
        rounded corners=2mm,
        draw=myblue,
        thick,
        fill=myblue!10,
        minimum width=2.2cm,
        minimum height=1.5cm,
        align=center,
    },
    policy/.style={
        rectangle,
        rounded corners=2mm,
        draw=mypurple,
        thick,
        fill=mypurple!10,
        minimum width=2.2cm,
        minimum height=1.5cm,
        align=center,
    },
    implementation/.style={
        rectangle,
        rounded corners=2mm,
        draw=myorange,
        thick,
        fill=myorange!10,
        minimum width=2.2cm,
        minimum height=1.5cm,
        align=center,
    },
    flow/.style={
        ->,
        line width=1.2pt,
        draw=gray!80
    }
]

    \node[database] (history) {
        Data \\
        $\{(\xi_i, \eta_i)\}_{i=1}^N$
    };

    \node[posterior, right=of history] (posterior_node) {
        Posterior \\
        $\mathsf{p}_N$ in \eqref{post-bayes-inf}
    };

    \node[value, right=of posterior_node] (bellman) {
        Bayesian value \\
        function \\
        $V_N^*$ in \eqref{bell-bayes-inf}
    };

    \node[policy, right=of bellman] (policy_node) {
        Bayesian \\
        policy \\
        $\pi_N^*$ in \eqref{optsol}
    };

    \node[implementation, right=of policy_node] (implement) {
        Implement  \\
policy $\pi_N^*$ for \\
        one stage
    };

    \node[database, right=of implement] (newdata) {
        Observe \\
        $(\xi_{N+1},\eta_{N+1})$
    };

    \draw[flow] (history) -- (posterior_node);
    \draw[flow] (posterior_node) -- (bellman);
    \draw[flow] (bellman) -- (policy_node);
    \draw[flow] (policy_node) -- (implement);
    \draw[flow] (implement) -- (newdata);

    \draw[flow]
        (newdata.south)
        -- ++(0,-0.8)
        -| node[pos=0.25, below, font=\footnotesize]
            {update data and repeat}
        (history.south);

\end{tikzpicture}
\caption{Schematic of the Bayesian
sequential estimate-then-control pipeline after observing
exogenous data up to time \(N\). The accumulated historical data
\(\{(\xi_i,\eta_i)\}_{i=1}^N\) are used to
construct the posterior \(\mathsf{p}_N\), which defines the
posterior predictive expectation
used to solve for the Bayesian value function \(V_N^*\) and the
corresponding optimal policy \(\pi_N^*\). This policy is implemented for
one period; after observing \((\xi_{N+1},\eta_{N+1})\), the data set is
updated and the procedure is repeated.}
\label{fig:bayesian}
\end{figure}

\begin{remark}
The Bayesian Bellman equation \eqref{bell-bayes-inf} can be viewed as a Bayesian average approximation of the original Bellman equation \eqref{bell-inf}. It is also possible to replace the expectation $\bbe_{\theta\sim\mathsf{p}_N}$ with a risk measure with respect to the posterior, leading to a Bayesian risk formulation inspired by \cite{wu2018,Zhou2022NIPS}. The risk measure (including expectation) represents the decision maker's risk attitude towards the epistemic uncertainty induced by the unknown context-conditional distribution of the randomness.
\end{remark}

\begin{remark}
It is important to distinguish the role of the randomness $\xi$ in the learning versus the control phase.
The posterior density $\mathsf{p}_N$ is constructed using the historical dataset $\{(\xi_i, \eta_i)\}_{i=1}^N$.
For each fixed \(N\), this posterior is treated as fixed when
solving the Bayesian Bellman equation \eqref{bell-bayes-inf}. Thus
\(N\) indexes the amount of historical exogenous data available for
learning, while \(t\) indexes the stages \(t=1,2,\ldots\) of the
infinite-horizon control problem. The paper studies the sequence of
data-dependent Bellman equations, value functions, and policies as
\(N\to\infty\); it does not study an adaptive Bayesian control problem
in which the posterior is part of the controlled state.
In contrast, the variable $\xi$ appearing in the expectations 
\eqref{bell-3} and \eqref{bell-bayes-inf} represents a generic future uncertain randomness.
While the control must be chosen before this future $\xi$ is realized (see \Cref{fig:data-process-illustration}), the expectation $\bbe_{\mathsf{p}_N| \eta}$ in  \eqref{bell-3} averages over the posterior predictive distribution given the historical data.
\end{remark}

\section{Consistency}

We analyze Bayesian learning and control in the Markovian  setting.
We introduce notation and probability laws necessary for our Bayesian analysis.
Subsequently, we prove posterior consistency with Markov samples and the
resulting uniform consistency of the Bayesian value functions.

\paragraph{Notation and terminology.}
For a set $\mathcal{Y}$, let $\mathbb{B}(\mathcal{Y})$ denote the space of
bounded real-valued functions on $\mathcal{Y}$. For $V\in\mathbb{B}(\mathcal{Y})$,
we write $\|V\|_{\infty}\coloneqq   \sup_{y\in\mathcal{Y}}|V(y)|$ for the uniform
norm. For vectors $v\in\mathbb{R}^{s}$, we write $\|v\|_2$ for the Euclidean norm.
For a set $E\subset \mathbb{R}^{s}$, we denote its interior by $\mathrm{int}(E)$.
For $y\in\mathbb{R}^{s}$ and $\varepsilon>0$, define the open ball
$B_{\varepsilon}(y)\coloneqq   \{z\in\mathbb{R}^{s}:\|z-y\|_2<\varepsilon\}$ and its
closure $\overline{B}_{\varepsilon}(y)\coloneqq  \{z\in\mathbb{R}^{s}:\|z-y\|_2\le
\varepsilon\}$. By $|\Sigma|$ we denote the determinant of matrix $\Sigma$. 
By $\dist(\theta,\Theta)\coloneqq \inf_{\theta'\in \Theta}\|\theta-\theta'\|_2$  we denote the distance from  $\theta \in \mathbb{R}^{s}$ to the set $\Theta$. 
Let $Q$ be a probability measure. 
We write $\mathbb{E}_{Q}$ for the expectation with respect to the
law $Q$.
In asymptotic statements,
``$\xrightarrow{Q}$'' denotes convergence in probability under $Q$, and
``$\overset{Q}{\rightsquigarrow}$'' denotes convergence in distribution under
$Q$.

In the Bayesian consistency and Bernstein--von Mises-type asymptotic analysis, we distinguish
between the true data-generating law, the parametric laws indexed by $\theta$, 
the corresponding stationary distribution of $\{(\xi_t, \eta_t)\}_{t \geq 1}$, 
and the posterior. We introduce these laws now.

\begin{itemize}
\item For $\theta\in\Theta$, let $\mathbb{P}_\theta$ denote the probability law of
the process $\{(\xi_t,\eta_t)\}_{t\ge 1}$ when $\xi_t$ has density
$f(\cdot| \eta_t,\theta)$ and $\{\eta_t\}_{t\ge 1}$ evolves according to
\eqref{mchain}.
\item We denote by $\mathbb{P}^*$ the distribution of the true data-generating process
$(\xi_t,\eta_t)$, $t=1,2, \ldots$. Please note that $\mathbb{P}^*$  
is identical to $\mathbb{P}_{\theta^*}$, 
if the model is correctly specified  at $\theta^*$ as defined in \Cref{def-cor}  below.
\item When  the Markov chain \eqref{mchain} has a stationary distribution
$\nu_\eta$, 
 we denote the corresponding
stationary one-period law by $P^*$, that is,
\begin{equation}
\label{eq:Pstar}
\mathrm{d}P^*(\xi,\eta)
\coloneqq
\nu_\eta(\eta)q(\xi|\eta)\,\mathrm{d}\xi .
\end{equation}

\item Let $\mathsf{P}_N$ be the probability measure induced by the posterior density $\mathsf{p}_N$, that is, it is the posterior measure $\mathsf{P}_N(A) \coloneqq  \int_{A} \mathsf{p}_N(\theta) \mathrm{d}\theta$
defined for measurable $A \subset \Theta$.
\end{itemize}

Throughout, $\mathbb{P}^*$ and $\mathbb{P}_\theta$ denote path-space
laws for the full process $\{(\xi_t,\eta_t)\}_{t\ge 1}$, while $P^*$
denotes the stationary one-period marginal law of a generic pair
$(\xi,\eta)$ under the true data-generating process. Consequently,
$\mathbb{E}_{P^*}$ is used for population expectations under the
stationary marginal law, whereas almost-sure, in-probability, and
weak-convergence statements are taken with respect to $\mathbb{P}^*$.

\subsection{Consistency of Bayesian posterior with Markov samples}
\label{sec:consistency}
It is well known that the Bayesian posterior is consistent with independent and identically distributed (i.i.d.) samples (see, e.g., \cite{vaart}). In our setting, the data samples $\{(\xi_i,\eta_i)\}_{i \geq 1}$ are not i.i.d.\ because $\{\eta_i\}_{i \geq 1}$ is generated from a Markov chain. Bayesian consistency still holds with  Markovian samples (see, e.g., \cite{GT06, GV07, KA25}). The results in these papers are usually proven by assuming entropy or existence of certain test statistics, for general stochastic processes such as $\alpha$-mixing Markov chains. Instead, we show Bayesian consistency under simpler assumptions but only for ergodic Markov chains. Our proof is built on the proof of Bayesian consistency for i.i.d.\ data in \cite{SZY2023}.

Suppose that the Markov chain \eqref{mchain} admits a stationary distribution
$\nu_\eta$. Then, as noted above, the joint process $\{(\xi_t,\eta_t)\}_{t\ge 1}$
is a Markov chain with stationary distribution
$P^*$.
For $\theta\in\Theta$, define the population log-likelihood
\begin{equation}\label{eq:psi}
\lambda(\theta)\coloneqq \bbe_{P^*}\left[\log f(\xi| \eta,\theta)\right]
=\sum_{\eta\in\HH}\nu_\eta(\eta) \int_\Xi \log f(\xi| \eta,\theta)\, q(\xi| \eta)\,\mathrm{d}\xi,
\end{equation}
where $P^*$ is defined in \eqref{eq:Pstar}.
Then, up to an additive constant independent of $\theta$,
$-\lambda(\theta)$ is the conditional Kullback--Leibler (KL) divergence from $q(\cdot| \eta)$ to $f(\cdot| \eta,\theta)$ averaged under the distribution $\nu_{\eta}$.
Let
\begin{equation}\label{eq:theta-star-set}
\Theta^*\coloneqq \mathrm{argmax}_{\theta\in\Theta}\, \lambda(\theta)
=\mathrm{argmin}_{\theta\in\Theta} \sum_{\eta\in\HH}\nu_\eta(\eta)
\int_\Xi q(\xi| \eta)\log\!\left(\frac{q(\xi| \eta)}{f(\xi| \eta,\theta)}\right)\!\mathrm{d}\xi.
\end{equation}
We define the empirical log-likelihood as 
\begin{equation}
\label{eq:phi_N}
\lambda_N(\theta)\coloneqq \frac{1}{N}\sum_{i=1}^N \log f(\xi_i| \eta_i,\theta).
\end{equation}
We now make the following assumptions.

\begin{assumption}\label{assump:consistency}
  {\rm (i)} The parameter set   $\Theta\subset\mathbb{R}^{s}$ is convex and compact with nonempty interior.
  {\rm (ii)}  The prior density $p$ is bounded and bounded away from $0$ on $\Theta$,
that is,  there exist constants $c_1\ge c_2>0$ such that
  $c_1\ge p(\theta)\ge c_2$ for all $\theta\in\Theta$.
  {\rm (iii)}  For every $(\xi,\eta)\in \Xi\times \HH$, $f(\xi |\eta,\theta)>0$ for all $\theta\in\Theta$. 
   {\rm (iv)}
 For every $(\xi,\eta)\in \Xi\times \HH$,
    $\theta\mapsto f(\xi|\eta,\theta)$ is continuous on $\Theta$.
  {\rm (v)} The Markov chain $\{\eta_t\}_{t \geq 1}$ is irreducible and aperiodic.
  {\rm (vi)} $\log f(\xi|\eta,\theta)$ is dominated by an integrable function with respect to $P^*$. 
\end{assumption}

Now, we demonstrate that the empirical log-likelihood satisfies a uniform law
of large numbers (LLN) under the true Markov data-generating process.

\begin{lemma}\label{lem:ULLN}
    Under \Cref{assump:consistency}\textup{(i,iii,iv,v,vi)} the following uniform LLN holds:
  \[
  \lim_{N\to\infty}\sup_{\theta\in\Theta}\left|
  \lambda_N(\theta)-\lambda(\theta)\right|=0, \;\;
\mathbb{P}^*\text{-almost surely.}
  \]
\end{lemma}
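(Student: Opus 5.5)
The plan is the classical Wald/Le Cam argument for a uniform law of large numbers: prove a pointwise ergodic LLN for the Markov chain, then upgrade it to uniform convergence using compactness of $\Theta$, continuity in $\theta$, and an integrable envelope.

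\textbf{Step 1: ergodic LLN for the joint chain.} Under \Cref{assump:consistency}\textup{(v)}, the finite chain $\{\eta_t\}$ is irreducible and aperiodic, so it has a unique stationary distribution $\nu_\eta$. The chain is positive recurrent, and for any $\nu_\eta$-integrable $g$ and any initial state (in particular the deterministic $\eta_1$) we have $N^{-1}\sum_{i=1}^N g(\eta_i)\to \sum_\eta \nu_\eta(\eta) g(\eta)$ almost surely.

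We then pass to the pair $(\xi_i,\eta_i)$. Given $\{\eta_i\}$, the $\xi_i$ are conditionally independent with densities $q(\cdot|\eta_i)$. Hence $\{(\xi_i,\eta_i)\}$ is a Harris-recurrent Markov chain whose transition kernel depends only on $\eta_i$ (draw $\eta_{i+1}$, then $\xi_{i+1}\sim q(\cdot|\eta_{i+1})$), with invariant law $P^*$.

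To avoid general Harris theory, I would use a direct argument. Split the indices by context: for each $h\in\HH$, let $N_h$ be the number of $i\le N$ with $\eta_i=h$. Since $\HH$ is finite and each state is visited infinitely often almost surely, $N_h/N\to\nu_\eta(h)$. Conditionally on the context sequence, the $\xi_i$ with $\eta_i=h$ are i.i.d.\ from $q(\cdot|h)$.

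A cleaner construction is to build the process on a product space: an independent i.i.d.\ sequence $\{\zeta^h_k\}_{k\ge1}\sim q(\cdot|h)$ for each $h$, with $\xi_i$ set to the next unused $\zeta^{\eta_i}$. This realization has the same path law $\mathbb{P}^*$. With it, for any $g$ integrable under $P^*$,
\[
\frac1N\sum_{i=1}^N g(\xi_i,\eta_i)=\sum_{h\in\HH}\frac{N_h}{N}\cdot\frac{1}{N_h}\sum_{k=1}^{N_h} g(\zeta^h_k,h)\to \sum_h\nu_\eta(h)\int g(\xi,h)q(\xi|h)\,\mathrm{d}\xi
\]
almost surely, by the strong LLN for each $h$ together with $N_h\to\infty$. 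Because the statement is about almost-sure convergence of a functional of the path, it transfers to $\mathbb{P}^*$.

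\textbf{Step 2: upgrade to uniform convergence.} Let $G(\xi,\eta)$ be the envelope from \textup{(vi)}. By \textup{(iii)}–\textup{(iv)}, $\theta\mapsto\log f(\xi|\eta,\theta)$ is finite and continuous. Fix $\theta_0\in\Theta$ and $\delta>0$, and define
\[
g_{\theta_0,\delta}^{\pm}(\xi,\eta)=\sup/\inf_{\theta\in\Theta\cap \overline{B}_\delta(\theta_0)}\log f(\xi|\eta,\theta).
\]
These are measurable: by continuity the sup/inf may be taken over a countable dense subset. They are dominated by $G$. As $\delta\downarrow0$ they converge pointwise to $\log f(\xi|\eta,\theta_0)$, so by dominated convergence $\bbe_{P^*}[g^+_{\theta_0,\delta}-g^-_{\theta_0,\delta}]\to0$. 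The same argument shows that $\lambda$ is continuous.

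Given $\e>0$, choose for each $\theta_0$ a radius $\delta(\theta_0)$ with $\bbe_{P^*}[g^+-g^-]<\e$. By compactness \textup{(i)}, extract a finite subcover with centers $\theta_1,\dots,\theta_K$. Apply Step 1 to the $2K$ functions $g^\pm_{\theta_k}$. Then almost surely, for all large $N$ and every $\theta$ in the $k$-th ball,
\[
\lambda_N(\theta)-\lambda(\theta)\le \frac1N\sum_i g^+_{\theta_k}(\xi_i,\eta_i)-\bbe_{P^*}g^-_{\theta_k}\le \bbe_{P^*}[g^+_{\theta_k}-g^-_{\theta_k}]+\e<2\e,
\]
and symmetrically from below. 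Intersecting over $\e=1/m$ gives the claim.

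\textbf{Main obstacle.} The delicate point is Step 1: justifying the strong LLN for the non-i.i.d.\ pair process from the deterministic start $\eta_1$, rather than from stationarity. I expect the per-context i.i.d.\ decomposition to handle this, with care needed to confirm that the coupling reproduces $\mathbb{P}^*$. This requires that $\eta_{t+1}$ be conditionally independent of $\xi_{[t]}$ given $\eta_t$, as the model assumes. As a fallback, one can cite the ergodic theorem for positive Harris chains, e.g.\ via regeneration at visits to a fixed context state $h_0$: the cycles are i.i.d., and integrability of the cycle sums follows from $G\in L^1(P^*)$ and finite expected return times.
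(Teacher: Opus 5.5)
Your proposal is correct and has the same two-step structure as the paper's proof. First, a pointwise almost-sure LLN for the pair chain started from the deterministic $\eta_1$; second, an upgrade to uniform convergence using compactness of $\Theta$, continuity in $\theta$, and the integrable envelope from \Cref{assump:consistency}(vi). The only differences are in the level of detail: you justify the pointwise step by an explicit per-context i.i.d.\ coupling (which uses only irreducibility, not aperiodicity) instead of appealing to ergodicity of the joint chain, and you write out the Wald bracketing argument that the paper compresses into one sentence.
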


\begin{proof}{Proof}
    \Cref{assump:consistency}(v) implies $\{\eta_t\}_{t \geq 1}$ is a (geometrically) ergodic Markov chain, and moreover, $\{(\xi_t,\eta_t)\}_{t \geq 1}$ is also ergodic. Hence, for each fixed $\theta \in \Theta$, regardless of the initial condition $(\xi_1,\eta_1)$, $\lim_{N\rightarrow\infty} |\lambda_N(\theta) -\lambda(\theta)| =0$, $\mathbb{P}^*$-almost surely. With the dominated integrability of $\log f(\xi|\eta,\theta)$ and the compactness of $\Theta$, the LLN holds uniformly over $\Theta$.
\end{proof}

With this uniform LLN in hand, we next show that the posterior density
\(\mathsf{p}_N\) decays exponentially outside any neighborhood of \(\Theta^*\).

\begin{lemma}[Exponential decay away from $\Theta^*$]\label{lem:exp-decay}
For $\theta^*\in \Theta^*$, define $V_\e\coloneqq \{\theta\in \Theta:\lambda(\theta^*)-\lambda(\theta)\ge \e\}$ and $U_\e\coloneqq \Theta\setminus V_\e$. Suppose that \Cref{assump:consistency} holds.
Then for any $\varepsilon>0$ and any $0<\beta<\alpha<\varepsilon$, with $\mathbb{P}^*$-probability $1$ for all sufficiently large $N$,
\begin{equation}\label{eq:exp-decay}
\sup_{\theta\in V_\varepsilon}\mathsf{p}_N(\theta)\le \kappa(\beta)^{-1}(c_1/c_2)^2\exp\{-N(\alpha-\beta)\},
\end{equation}
where $\kappa(\beta)\coloneqq \int_{U_{\beta/2}} \mathrm{d}\theta>0$.
\end{lemma}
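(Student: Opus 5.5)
Write the posterior as $\mathsf{p}_N(\theta)=\exp\{N\lambda_N(\theta)\}\,p(\theta)\big/\int_\Theta \exp\{N\lambda_N(\vartheta)\}\,p(\vartheta)\,\mathrm{d}\vartheta$. The factors $\varpi_{\eta_{i-1},\eta_i}$ in $\sff_N$ do not depend on $\theta$ and cancel, and they are positive along the observed path. The plan is to bound the numerator from above on $V_\varepsilon$ and the denominator from below by restricting the integral to $U_{\beta/2}$. Both bounds come from the uniform LLN of \Cref{lem:ULLN}.

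First, fix $\delta>0$, to be chosen later in terms of $\alpha,\beta,\varepsilon$. By \Cref{lem:ULLN}, $\mathbb{P}^*$-a.s.\ there is $N_0$ such that for $N\ge N_0$ we have $\sup_\theta|\lambda_N(\theta)-\lambda(\theta)|\le\delta$.

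For the numerator, take $\theta\in V_\varepsilon$. Then $\lambda_N(\theta)\le \lambda(\theta)+\delta\le \lambda(\theta^*)-\varepsilon+\delta$, and therefore
\[
\exp\{N\lambda_N(\theta)\}\,p(\theta)\le c_1\exp\{N(\lambda(\theta^*)-\varepsilon+\delta)\}.
\]
For the denominator, take $\vartheta\in U_{\beta/2}$. Then $\lambda_N(\vartheta)\ge\lambda(\theta^*)-\beta/2-\delta$, which gives
\[
\int_\Theta \exp\{N\lambda_N(\vartheta)\}\,p(\vartheta)\,\mathrm{d}\vartheta\ge c_2\,\kappa(\beta)\exp\{N(\lambda(\theta^*)-\beta/2-\delta)\}.
\]
Dividing the two bounds gives
\[
\sup_{V_\varepsilon}\mathsf{p}_N\le (c_1/c_2)\,\kappa(\beta)^{-1}\exp\{-N(\varepsilon-\beta/2-2\delta)\}.
\]
Now choose $\delta\le(\varepsilon-\alpha+\beta/2)/2$. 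This is positive because $\alpha<\varepsilon$, and it makes $\varepsilon-\beta/2-2\delta\ge\alpha-\beta$. The resulting bound is $(c_1/c_2)\kappa(\beta)^{-1}e^{-N(\alpha-\beta)}$. Since $c_1/c_2\ge1$, it is dominated by the stated bound with $(c_1/c_2)^2$, so the extra slack in the paper's constant is harmless.

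The remaining step, which I expect to be the main point needing care, is to show $\kappa(\beta)>0$, i.e.\ that $U_{\beta/2}=\{\theta:\lambda(\theta)>\lambda(\theta^*)-\beta/2\}$ has positive Lebesgue measure.
\begin{itemize}
\item Continuity of $\lambda$ on $\Theta$ follows from \Cref{assump:consistency}(iv), the domination in (vi), and dominated convergence; (iii) guarantees that the logarithm is finite.
\item Hence $U_{\beta/2}$ is relatively open in $\Theta$ and contains $\theta^*$, so it contains $B_r(\theta^*)\cap\Theta$ for some $r>0$.
\item Because $\Theta$ is convex with nonempty interior (i), the set $B_r(\theta^*)\cap\Theta$ has positive volume. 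Indeed, it contains a small homothetic copy of $\Theta$ centered at $\theta^*$, namely $\theta^*+t(\Theta-\theta^*)$ for small $t>0$, and this copy has positive volume.
\end{itemize}
A minor technicality: $V_\varepsilon$ may be empty, in which case the statement is trivial. The almost-sure "for all sufficiently large $N$" comes directly from the a.s.\ uniform convergence.
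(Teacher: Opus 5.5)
Your proof is correct and takes essentially the same approach as the paper. The paper invokes the ratio argument of Lemma~3.1 in \cite{SZY2023}: an upper bound on the numerator over $V_\varepsilon$ and a lower bound on the normalizing integral over $U_{\beta/2}$, with the i.i.d.\ uniform LLN replaced by \Cref{lem:ULLN}; your version additionally gives the sharper constant $c_1/c_2$ and explicitly proves $\kappa(\beta)>0$ via continuity of $\lambda$ and convexity of $\Theta$, which the paper leaves implicit.
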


\begin{proof}{Proof}
   It follows by the same argument as Lemma~3.1 in \cite{SZY2023}, with the i.i.d.\ uniform LLN replaced by the uniform LLN in \Cref{lem:ULLN}.
\end{proof}

We recall that $\mathsf{P}_N$ is the probability measure induced by the
posterior density $\mathsf{p}_N$. For standard posterior-consistency
notions in the i.i.d.\ setting, we refer the reader to Section~10.4 in \cite{vaart}.

\begin{theorem}[Posterior consistency]\label{thm:consistency}
Suppose that \Cref{assump:consistency} holds.
Let $\theta_N$ be a random variable with the  posterior density $\mathsf{p}_N(\cdot)$ defined in \eqref{post-bayes-inf}.
Then,  $\mathbb{P}^*$-almost surely,
\begin{equation}\label{conprob}
 \dist(\theta_N,\Theta^*)\xrightarrow{\mathsf{P}_N} 0,\quad  \text{as} \quad  N\to\infty.
\end{equation}
In particular, if $\Theta^*=\{\theta^*\}$ is the singleton, then
$\mathbb{P}^*$-almost surely, $\theta_N\xrightarrow{\mathsf{P}_N}\theta^*$.
\end{theorem}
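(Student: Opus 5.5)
The plan is to derive posterior consistency directly from the exponential decay bound of \Cref{lem:exp-decay}. This requires two ingredients: a geometric link between the sets $V_\varepsilon$ and neighborhoods of $\Theta^*$, and the boundedness of $\Theta$ to pass from a pointwise bound on the density to a bound on posterior mass.

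Fix $\delta>0$ and set $A_\delta\coloneqq\{\theta\in\Theta:\dist(\theta,\Theta^*)\ge\delta\}$. We must show that $\mathsf{P}_N(A_\delta)\to0$, $\mathbb{P}^*$-almost surely.

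\emph{Step 1: $\lambda$ is continuous.} Under \Cref{assump:consistency}(iv),(vi), dominated convergence makes $\lambda$ continuous on the compact set $\Theta$. In particular $\Theta^*$ is nonempty and compact, and $\lambda(\theta^*)=\max_\Theta\lambda$ for any $\theta^*\in\Theta^*$.

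\emph{Step 2: a uniform gap away from $\Theta^*$.} The set $A_\delta$ is closed in $\Theta$, hence compact, and it is disjoint from $\Theta^*$. If $A_\delta$ is empty there is nothing to prove. Otherwise continuity of $\lambda$ gives
\[
\varepsilon\coloneqq\lambda(\theta^*)-\max_{\theta\in A_\delta}\lambda(\theta)>0,
\]
so that $A_\delta\subset V_\varepsilon$.

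\emph{Step 3: apply the decay bound.} Choose $0<\beta<\alpha<\varepsilon$, say $\alpha=2\varepsilon/3$ and $\beta=\varepsilon/3$. By \Cref{lem:exp-decay}, there is a $\mathbb{P}^*$-full event on which, for all sufficiently large $N$,
\[
\mathsf{P}_N(A_\delta)\le\int_{V_\varepsilon}\mathsf{p}_N(\theta)\,\mathrm{d}\theta\le \mathrm{vol}(\Theta)\,\kappa(\beta)^{-1}(c_1/c_2)^2 e^{-N(\alpha-\beta)}\to0 .
\]
Here $\mathrm{vol}(\Theta)<\infty$ because $\Theta$ is compact. The constant $\kappa(\beta)$ is positive because $U_{\beta/2}$ is a relatively open subset of $\Theta$ containing $\theta^*$, and $\Theta$ is convex with nonempty interior, so every relative neighborhood of a point of $\Theta$ has positive Lebesgue measure.

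\emph{Step 4: make the null set independent of $\delta$.} Take $\delta=1/k$ for $k\in\mathbb{N}$ and intersect the countably many full-measure events. Since $A_\delta$ decreases as $\delta$ increases, this yields a single $\mathbb{P}^*$-full event on which $\mathsf{P}_N(\dist(\theta_N,\Theta^*)\ge\delta)\to0$ for every $\delta>0$, which is exactly \eqref{conprob}.

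\emph{Singleton case.} If $\Theta^*=\{\theta^*\}$, then $\dist(\theta_N,\Theta^*)=\|\theta_N-\theta^*\|_2$, and the second claim follows immediately.

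The main obstacle is Step 2, the separation gap. It depends on continuity of $\lambda$, which must be extracted from the domination hypothesis (vi). That hypothesis should be read as a single integrable envelope valid uniformly in $\theta$; if it is only pointwise in $\theta$, one would instead invoke the uniform LLN of \Cref{lem:ULLN}, whose proof already implicitly uses continuity of $\lambda$.

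A secondary point is that \Cref{lem:exp-decay} holds almost surely for each fixed $\varepsilon$. The countable reduction in Step 4 takes care of this.
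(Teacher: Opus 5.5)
Your proof is correct and follows the paper's route: bound the posterior mass of $V_\varepsilon$ by $\mathrm{vol}(\Theta)\sup_{V_\varepsilon}\mathsf{p}_N$ and invoke \Cref{lem:exp-decay}. Your Steps 2 and 4 make explicit what the paper compresses into the remark that the sets $U_\varepsilon$ shrink to $\Theta^*$: Step 2 is the compactness/continuity gap giving $A_\delta\subset V_\varepsilon$, and Step 4 is the countable intersection of null sets.
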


\begin{proof}{Proof}
Fix $\varepsilon>0$.
By definition, $\Theta\setminus U_\varepsilon=V_\varepsilon$.
Therefore
\[
\mathsf{P}_N\big(\theta_N\in V_\varepsilon\big)
=\int_{V_\varepsilon}\mathsf{p}_N(\theta)\,\mathrm{d}\theta
\le \left(\int_{\Theta} \mathrm{d}\theta\right)\sup_{\theta\in V_\varepsilon}\mathsf{p}_N(\theta).
\]
By \Cref{lem:exp-decay}, the right-hand side converges to $0$ $\mathbb{P}^*$-almost surely, so
$\mathsf{P}_N(\theta_N\in U_\varepsilon)\to 1$ $\mathbb{P}^*$-almost surely. The sets $V_\e$ and $U_\e$ do not depend on a particular choice of $\theta^*\in \Theta^*$, and the sets $\{U_\varepsilon, \varepsilon > 0\}$ shrink to $\Theta^*$ as $\varepsilon\downarrow 0$. This implies the claimed convergence of $\dist(\theta_N,\Theta^*)$ in probability.
 \end{proof}

\Cref{thm:consistency} involves two sources of randomness. The ``outer'' randomness
comes from the data sequence $\{(\xi_i,\eta_i)\}_{i\ge 1}$ generated under the
true law $\mathbb{P}^*$, which makes the posterior measure $\mathsf{P}_N$
(equivalently, the density $\mathsf{p}_N$) a random object. Conditional on the
observed data, $\theta_N$ is an ``inner'' random variable drawn from the posterior,
that is, $\theta_N\sim \mathsf{P}_N$.

\subsection{Consistency of value functions}

We now turn to consistency of the Bayesian value functions under correct
specification and identifiability of the randomness model.

\begin{definition}
\label{def-cor}
It is said that the model is {\em correctly specified} if there exists $\theta^*\in \Theta$ such that  $q(\cdot|\eta)=f(\cdot|\eta, \theta^*)$ for all $\eta\in \HH$.
It is said that the model is {\em  identifiable} (at $\theta^*$)  if 
$f(\cdot|\eta, \theta^*)=f(\cdot|\eta, \theta')$,
for all $\eta \in \HH$ with
$\theta'\in \Theta$, implies $\theta'=\theta^*$. 
\end{definition}

From now on, we assume that the model is   correctly specified and  identifiable  at $\theta^*$.
Recall the definition of $\lambda(\theta)$ and      $\Theta^*$ in \eqref{eq:psi} and \eqref{eq:theta-star-set}, respectively.
For each $\eta\in\HH$, the quantity
$
\int_\Xi q(\xi| \eta)
\log\big(
\frac{q(\xi| \eta)}{f(\xi| \eta,\theta)}
\big)
\,\mathrm{d}\xi
$
is the KL divergence from $q(\cdot| \eta)$ to
$f(\cdot| \eta,\theta)$, and hence is nonnegative. Since the
Markov chain on the finite state space $\HH$ is irreducible, its
stationary distribution satisfies $\nu_\eta(\eta)>0$ for every
$\eta\in\HH$. Therefore, if the model is correctly specified at
$\theta^*$, then the weighted KL divergence in
\eqref{eq:theta-star-set} is minimized at $\theta^*$, and hence
\[
\theta^*
\in
\argmax_{\theta\in\Theta}
\bbe_{P^*}[\log f(\xi | \eta,\theta)],
\]
where $P^*$ is defined in \eqref{eq:Pstar}. If, in addition, the
model is identifiable at $\theta^*$, then this maximizer is unique.
By \Cref{thm:consistency} this implies that the Bayesian posterior $\mathsf{p}_N$ almost surely converges to a $\theta^*$ that maximizes $\bbe_{P^*}[\log f(\xi|\eta,\theta)]$, the population log-likelihood of the context-randomness pair $(\xi,\eta)$, as the data size $N$ increases to infinity. That is, 
under \Cref{assump:consistency}  $\mathbb{P}^*$-almost surely, $\theta_N$ converges in probability to $\theta^*$ with respect to the posterior 
measure $\mathsf{P}_N$.

\begin{assumption}
\label{assume:infinite-horizon-1}
The model is correctly specified  and the true parameter $\theta^* \in \mathrm{int}(\Theta)$ 
is  identifiable.
\end{assumption}

We demonstrate the asymptotic consistency of the 
Bayesian value function $V_N^*$ toward
the true value function $V^*$.

\begin{proposition}
\label{prop:consistency-value-functions}
If \Cref{assump:consistency,assume:infinite-horizon-1} hold, 
then $V_N^*$ converges to $V^*$ uniformly on $\mathcal{X} \times \mathcal{H}$ 
as $N \to \infty$, $\mathbb{P}^*$-almost surely, 
that is  
$\|V^*_N - V^*\|_{\infty} \to 0$
as $N \to \infty$, $\mathbb{P}^*$-almost surely.
\end{proposition}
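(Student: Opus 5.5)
We compare the two fixed-point equations \eqref{bell-inf} and \eqref{bell-bayes-inf} through their Bellman operators. Define, for bounded $V$ on $\mathcal{X}\times\HH$,
\[
(\mathcal{T}V)(x,\eta)=\inf_{u\in\U}\bbe_{\xi\sim q(\cdot|\eta)}\big[G_V(x,u,\eta,\xi)\big],\qquad
(\mathcal{T}_NV)(x,\eta)=\inf_{u\in\U}\bbe_{\mathsf{p}_N|\eta}\big[G_V(x,u,\eta,\xi)\big],
\]
where $G_V(x,u,\eta,\xi)\coloneqq c(x,u,\xi)+\gamma\sum_{\eta'}\varpi_{\eta,\eta'}V(F(x,u,\xi),\eta')$.

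Both operators are $\gamma$-contractions in $\|\cdot\|_\infty$. The standard perturbation bound therefore gives
\[
\|V_N^*-V^*\|_\infty=\|\mathcal{T}_NV_N^*-\mathcal{T}V^*\|_\infty\le \gamma\|V_N^*-V^*\|_\infty+\|\mathcal{T}_NV^*-\mathcal{T}V^*\|_\infty,
\]
and hence $\|V_N^*-V^*\|_\infty\le (1-\gamma)^{-1}\|\mathcal{T}_NV^*-\mathcal{T}V^*\|_\infty$. It thus suffices to show that $\|\mathcal{T}_NV^*-\mathcal{T}V^*\|_\infty\to0$ almost surely.

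Using $|\inf a-\inf b|\le\sup|a-b|$, the function $g\coloneqq G_{V^*}(x,u,\eta,\cdot)$ is bounded by $M\coloneqq\|c\|_\infty+\gamma\|V^*\|_\infty$ uniformly in $(x,u,\eta)$. By correct specification $q(\cdot|\eta)=f(\cdot|\eta,\theta^*)$, so
\[
\big|\bbe_{\mathsf{p}_N|\eta}[g]-\bbe_{q(\cdot|\eta)}[g]\big|
\le\int_\Theta \Big|\int_\Xi g(\xi)\big(f(\xi|\eta,\theta)-f(\xi|\eta,\theta^*)\big)\mathrm{d}\xi\Big|\,\mathsf{p}_N(\theta)\mathrm{d}\theta
\le M\int_\Theta \delta_\eta(\theta)\,\mathsf{p}_N(\theta)\mathrm{d}\theta,
\]
where $\delta_\eta(\theta)\coloneqq\|f(\cdot|\eta,\theta)-f(\cdot|\eta,\theta^*)\|_{L^1(\Xi)}\le 2$. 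This bound is uniform in $(x,u)$, and $\HH$ is finite. It therefore remains to show that $\bbe_{\mathsf{P}_N}[\delta_\eta(\theta_N)]\to0$ almost surely for each $\eta$.

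Two ingredients give this. First, by \Cref{assump:consistency}(iv), $\theta\mapsto f(\xi|\eta,\theta)$ is continuous, and all $f(\cdot|\eta,\theta)$ are densities. Scheffé's lemma then makes $\delta_\eta$ continuous at $\theta^*$: for $\theta_k\to\theta^*$ we have pointwise convergence of densities and hence $L^1$ convergence. So for any $\epsilon>0$ there is $r>0$ with $\delta_\eta<\epsilon$ on $B_r(\theta^*)\cap\Theta$. Second, under \Cref{assume:infinite-horizon-1} the discussion after \Cref{def-cor} gives $\Theta^*=\{\theta^*\}$. \Cref{thm:consistency} then yields, $\mathbb{P}^*$-almost surely, $\mathsf{P}_N(\|\theta_N-\theta^*\|_2\ge r)\to0$. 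Splitting the integral gives
\[
\bbe_{\mathsf{P}_N}[\delta_\eta]\le\epsilon+2\,\mathsf{P}_N(\|\theta_N-\theta^*\|_2\ge r),
\]
whose $\limsup$ is at most $\epsilon$. Intersecting the countably many almost-sure events (over rational $\epsilon$ and finitely many $\eta$) finishes the argument.

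The main subtlety I expect is the uniformity over $(x,u)$ combined with possible non-smoothness of $F$, $c$ and $V^*$. The $L^1$/total-variation bound handles it because it only uses boundedness of the integrand, not continuity. The other point to check is that the almost-sure event from \Cref{thm:consistency} is a single event valid for all radii $r$. Its proof via \Cref{lem:exp-decay} gives this for each $\varepsilon$, and monotonicity in $\varepsilon$ lets us restrict to countably many values.
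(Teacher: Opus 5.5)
Your proposal is correct and follows essentially the same route as the paper: the contraction perturbation bound $\|V_N^*-V^*\|_\infty\le(1-\gamma)^{-1}\|\mathcal{T}_NV^*-\mathcal{T}V^*\|_\infty$, a bound on the posterior-predictive discrepancy by $\|c\|_\infty+\gamma\|V^*\|_\infty$ times the posterior-averaged $L^1$ distance between densities, and then Scheff\'e's lemma plus \Cref{thm:consistency} to send that average to zero, which is exactly the content of \Cref{lem:predictive-L1}. Your explicit check that a single almost-sure event serves all radii is a useful detail that the paper leaves implicit.
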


We establish \Cref{prop:consistency-value-functions} using the following lemma,
which shows that, for each context state $\eta\in\mathcal{H}$, the posterior
predictive density converges to the true conditional density in mean. 
 Recall that the cost function $c \colon \mathcal{X} \times \mathcal{U} \times \Xi \to \mathbb{R}$
is assumed to be bounded. 

\begin{lemma}
\label{lem:predictive-L1}
If \Cref{assump:consistency,assume:infinite-horizon-1} hold, 
then for all $\eta\in\mathcal{H}$, $\mathbb{P}^*$-almost surely
$$
\mathbb{E}_{\theta\sim \mathsf{p}_N}\Big[
\int_{\Xi}
|
f(\xi|\eta,\theta)
-
f(\xi|\eta,\theta^*)
|
\,\mathrm{d}\xi
\Big]
\to 0,\;\;as\;N \to \infty.
$$
\end{lemma}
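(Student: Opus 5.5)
Fix $\eta\in\mathcal{H}$ and define $g_\eta(\theta)\coloneqq \int_{\Xi}|f(\xi|\eta,\theta)-f(\xi|\eta,\theta^*)|\,\mathrm{d}\xi$. The quantity in the statement is $\mathbb{E}_{\theta\sim\mathsf{p}_N}[g_\eta(\theta)]=\int_\Theta g_\eta\,\mathrm{d}\mathsf{P}_N$. The plan combines posterior consistency (\Cref{thm:consistency}) with continuity and boundedness of $g_\eta$, as in a Portmanteau-type argument.

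\emph{Step 1: $g_\eta$ is bounded and continuous.} Since both arguments are probability densities, $0\le g_\eta(\theta)\le 2$ for all $\theta$. For continuity at any $\theta_0\in\Theta$, take $\theta_k\to\theta_0$. By \Cref{assump:consistency}(iv), $f(\xi|\eta,\theta_k)\to f(\xi|\eta,\theta_0)$ pointwise in $\xi$, and each of these is a density integrating to one. Scheff\'e's lemma then gives $\int|f(\xi|\eta,\theta_k)-f(\xi|\eta,\theta_0)|\,\mathrm{d}\xi\to0$, and the triangle inequality yields $|g_\eta(\theta_k)-g_\eta(\theta_0)|\to 0$. Moreover $g_\eta(\theta^*)=0$.

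\emph{Step 2: concentration at $\theta^*$.} As discussed before \Cref{assume:infinite-horizon-1}, correct specification and identifiability, together with $\nu_\eta(\eta)>0$ for every $\eta$ (irreducibility on a finite state space), give $\Theta^*=\{\theta^*\}$. By \Cref{thm:consistency}, there is a $\mathbb{P}^*$-full event $\Omega_0$ on which $\mathsf{P}_N(\|\theta-\theta^*\|_2\ge\delta)\to0$ for every $\delta>0$. It suffices to take rational $\delta$, so a single null set covers all $\delta$; the statement as given already holds simultaneously in $\delta$ on a full event.

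\emph{Step 3: conclusion.} Fix $\omega\in\Omega_0$ and $\epsilon>0$. By Step 1 choose $\delta>0$ with $g_\eta(\theta)\le\epsilon$ whenever $\|\theta-\theta^*\|_2<\delta$. Splitting the integral gives
\[
\int_\Theta g_\eta\,\mathrm{d}\mathsf{P}_N\le \epsilon+2\,\mathsf{P}_N\big(\|\theta-\theta^*\|_2\ge\delta\big).
\]
Letting $N\to\infty$ and then $\epsilon\downarrow0$ proves the claim. Since $\mathcal{H}$ is finite, intersecting the full events over $\eta$ gives the result for all $\eta$ simultaneously.

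The only delicate point is Step 1: continuity of $\theta\mapsto f(\xi|\eta,\theta)$ is only pointwise in $\xi$, with no dominating function, so the $L^1$ continuity must come from Scheff\'e's lemma. Measurability of $g_\eta$ follows from its continuity.
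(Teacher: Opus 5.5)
Your proposal is correct and follows essentially the same route as the paper. Both bound the posterior mean by the supremum of the $L^1$ distance over a small ball around $\theta^*$, controlled via Scheff\'e's lemma and pointwise continuity in $\theta$, plus twice the posterior mass outside that ball, controlled by \Cref{thm:consistency} with $\Theta^*=\{\theta^*\}$; your extra bookkeeping (continuity of $g_\eta$ on all of $\Theta$ for measurability, and a single full-measure event covering all radii and all $\eta\in\HH$) simply makes explicit details the paper leaves implicit.
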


\begin{proof}{Proof}
Fix $\eta\in\mathcal{H}$.
Let $\varepsilon> 0$.
Standard arguments (see, e.g., eqns.~(22) and (23), and p.~6 in \cite{SZLW2025})
ensure
\begin{equation}
\label{eq:posterior-mean-bound}
\begin{aligned}
\mathbb{E}_{\theta\sim \mathsf{p}_N}\Big[
\int_{\Xi}
|
f(\xi|\eta,\theta)
-
f(\xi|\eta,\theta^*)
|
\,\mathrm{d}\xi
\Big]
& \le 
\sup_{\theta\in \overline{B}_{\varepsilon}(\theta^*)}
\int_{\Xi}\big|f(\xi|\eta,\theta)-f(\xi|\eta,\theta^*)\big|\,\mathrm{d}\xi
\\
& \quad +
2\mathsf{P}_N(\overline{B}_{\varepsilon}(\theta^*)^c).
\end{aligned}
\end{equation}
Since $f(\xi | \eta, \cdot)$ is continuous and $\{f(\xi | \eta, \theta),\theta\in\Theta\}$ are densities, 
Scheff\'e's lemma ensures
$
\int_{\Xi}\big|f(\xi|\eta,\theta)-f(\xi|\eta,\theta^*)\big|\,\mathrm{d}\xi
\to 0
$
as $\theta \to \theta^*$.
\Cref{thm:consistency} ensures $\mathbb{P}^*$-almost surely, 
$\mathsf{P}_N(\overline{B}_{\varepsilon}(\theta^*)^c) \to 0$.
Combined with \eqref{eq:posterior-mean-bound}, we obtain the assertion. 
\end{proof}

\begin{proof}{Proof of \Cref{prop:consistency-value-functions}}
The proof is inspired by those of Propositions~1 and 3 in  \cite{SZLW2025}. 
We define the Bellman operators 
$\mathcal{T}$, $\mathcal{T}_N \colon \mathbb{B}(\mathcal{X} \times \mathcal{H}) \to \mathbb{B}(\mathcal{X} \times \mathcal{H})$
by
\begin{equation}
\label{eq:bellman-operators}
\begin{aligned}
[\mathcal{T} V](x, \eta) &\coloneqq  
\inf_{u\in\U}\;
\bbe_{\xi\sim q(\cdot| \eta)}
\Big[
c(x,u,\xi)+\gamma\,
\bbe_{\eta'|\eta}
\big[V(F(x,u,\xi),\eta')\big]
\Big],
\\
[\mathcal{T}_N V](x, \eta) &\coloneqq  
\inf_{u\in\U}\;
\bbe_{\mathsf{p}_N| \eta}\Big[
c(x,u,\xi)+\gamma\,
\bbe_{\eta'|\eta}
\big[V(F(x,u,\xi),\eta')\big]
\Big].
\end{aligned}
\end{equation}%
Since $\gamma \in (0,1)$, 
the Bellman operators  $\mathcal{T}$ and $\mathcal{T}_N$
are $\gamma$-contractions. This ensures the standard error bound
(see, e.g., eq.~(19) in \cite{SZLW2025} and eq.~(4.2) in \cite{Shapiro2020})
\begin{equation*}
\|V^*_N - V^*\|_{\infty}
\leq 
(1-\gamma)^{-1}
\|\mathcal{T}_NV^* - \mathcal{T}V^*\|_\infty.
\end{equation*}
Since $c$ and $V^*$ are bounded, there exists a constant $C> 0$ such that
for all $(x, \eta) \in \mathcal{X} \times \mathcal{H}$,
\begin{equation*}
\Big|[\mathcal{T}_NV^*](x,\eta)-[\mathcal{T}V^*](x,\eta)\Big |
\leq C\
\mathbb{E}_{\theta\sim \mathsf{p}_N}
\Big[
\int_{\Xi}
|
f(\xi|\eta,\theta)
-
f(\xi|\eta,\theta^*)
|
\,\mathrm{d}\xi
\Big].
\end{equation*}
Combined with  \Cref{lem:predictive-L1} and the fact that $\mathcal{H}$ is finite, 
we obtain the assertion.
\end{proof}

\section{Asymptotics of the contextual optimal value}

We now study the first-order asymptotic behavior of the
data-driven contextual optimal value. Specifically,
for each fixed initial condition 
$(x_1, \eta_1) \in \mathcal{X} \times \mathcal{H}$,
we  derive the
limiting distribution for \(N^{1/2}\big(V_N^*(x_1, \eta_1)-V^*(x_1, \eta_1)\big)\) through a
Bernstein--von Mises-type analysis and thereby quantify how posterior
uncertainty in the learned randomness model propagates to the optimal
value.
Our analysis first records the
needed Markov-chain posterior approximation, then applies it to the
contextual optimal value, and finally treats the finite state-action SOC model

\subsection{A weighted local posterior approximation for Markov-chain data}

Inspired by \cite{Borwanker1971, Gillert1983}, we formulate 
a Bernstein--von Mises 
approximation for Markov chains. Acknowledging the fact that the
precise assumptions required to formulate a Bernstein--von Mises theorem
for Markov chains are technical and rather lengthy, we restrict our attention
to
a Markov-chain Bernstein--von Mises approximation stated as an
assumption, and then prove its posterior delta-method consequence in
\Cref{lem:posterior-delta}. We provide
sufficient technical background to make
this formulation meaningful.
In the next section, we establish Bernstein--von Mises-type limits for the
contextual optimal value, assuming that the results stated in this
section apply.
The Bernstein--von Mises theorems in \cite{Borwanker1971, Gillert1983}
apply only to one-dimensional parameter spaces; accordingly, we
state the multivariate local posterior approximation needed for our
analysis as an assumption.

Let 
$Z=\{Z_t\}_{t\ge 1}$ be a time-homogeneous Markov chain 
with measurable state space $\mathcal{Z}$
with arbitrary initial distribution
having density \(h_1(\cdot;\theta)\).
For each $\theta\in\Theta$, the Markov
chain is specified by transition densities
$h(z'| z, \theta)$ for $z,z'\in\mathcal{Z}$.
Given observations $Z_i$, $i=1, \ldots, N$, and a prior density
$r(\theta)$ on $\Theta$, the posterior density is given by
\begin{equation*}
\mathsf{r}_N(\theta)=
\frac{\mathsf{h}_{N}(\theta)r(\theta)}
{\int_{\Theta}\mathsf{h}_{N}(\vartheta)r(\vartheta)
\,\mathrm{d}\vartheta},
\qquad
\text{where}
\qquad 
\mathsf{h}_{N}(\theta)\coloneqq 
h_1(Z_1;\theta)
\prod_{i=2}^{N} h(Z_i| Z_{i-1}, \theta),
\end{equation*}
where we assume that
$\int_{\Theta}\mathsf{h}_{N}(\vartheta)r(\vartheta)
\,\mathrm{d}\vartheta > 0$.

Let $\theta^*\in\mathrm{int}(\Theta)$ be the true parameter, that is,
$Z_1$ has density \(h_1(\cdot;\theta^*)\), and
$Z_i| Z_{i-1}=z \sim h(\cdot| z, \theta^*)$ for $i=2, 3, \ldots$.
We assume that $\theta\mapsto h(y| z, \theta)$ is differentiable at
$\theta^*$ for all $(y,z) \in \mathcal{Z} \times \mathcal{Z}$ such that $h(y| z, \theta^*)>0$.
We define the Fisher information matrix at $\theta^*$ by
\begin{equation}
\label{eq:fisher-matrix-general-setting}
\mathfrak{I}(\theta^*)
\coloneqq 
\mathbb{E}_{(Z_1,Z_2)\sim \mu_{\theta^*}(\cdot)h(\cdot | \cdot, \theta^*)}
\big[
\nabla_\theta \log h(Z_2| Z_1, \theta^*) \nabla_\theta \log h(Z_2| Z_1, \theta^*)^\top
\big],
\end{equation}
where the expectation is taken over the joint law of $(Z_1,Z_2)$,
and $\mu_{\theta^*}$ denotes a stationary distribution of the Markov chain under the 
true parameter $\theta^*$.

Let $\mathfrak{P}^*$ be the law of 
$\{Z_t\}_{t\geq 1}$ under the true parameter
$\theta^*$.
Let $\hat\theta_N$ be a
maximum likelihood estimator based on $\mathsf h_N$, and let
$\mathsf r_N^*$ denote the posterior density of the local parameter
$\tau=N^{1/2}(\theta-\hat\theta_N)$.

\begin{assumption}[Weighted Bernstein--von Mises approximation]
\label{assump:weighted-bvm}
The true parameter satisfies 
$\theta^* \in \mathrm{int}(\Theta)$, 
the matrix $\mathfrak I(\theta^*)$ is nonsingular, 
$
N^{1/2}(\hat\theta_N-\theta^*)
\overset{\mathfrak P^*}{\rightsquigarrow}
\mathcal N(0,\mathfrak I(\theta^*)^{-1})
$,
and 
\begin{equation}
\label{eq:BvM-Markov-Chain}
\lim_{N\to\infty}
\int_{\mathbb R^s}
(1+\|\tau\|_2^2)
\big|\mathsf r_N^*(\tau)-\phi_{\mathfrak I}(\tau)\big|
\,\mathrm d\tau
=0,
\quad \mathfrak P^*\text{-almost surely},
\end{equation}
where 
$\phi_{\mathfrak I}$ is the density of
$\mathcal N(0,\mathfrak I(\theta^*)^{-1})$.
\end{assumption}

\Cref{assump:weighted-bvm} is the only Markov-chain Bernstein--von
Mises input used below. Weighted Bernstein--von Mises
versions such as \eqref{eq:BvM-Markov-Chain} 
follow under
regularity conditions in classical Bernstein--von Mises theory for
Markov chains; see, for example, \cite{Borwanker1971,Gillert1983,Bickel1969}. The
weight $1+\|\tau\|_2^2$ provides both total-variation convergence of the
local posterior and the moment control needed for posterior expectations.

\begin{lemma}
\label{lem:posterior-delta}
If Assumptions~\textup{\ref{assump:consistency}(i)} and 
\textup{\ref{assump:weighted-bvm}} hold, and 
\(g:\Theta\to\mathbb R\) is bounded and differentiable at
\(\theta^\ast \in \mathrm{int}(\Theta)\), then
\begin{equation}
\label{eq:posterior-mean}
N^{1/2}\big(
\mathbb{E}_{\theta \sim \mathsf{r}_N}[g(\theta)]-g(\theta^*)
\big)
\overset{\mathfrak{P}^*}{\rightsquigarrow}
\mathcal{N}\big(
0,
\nabla g(\theta^*)^\top \mathfrak{I}(\theta^*)^{-1}\nabla g(\theta^*)
\big).
\end{equation}
\end{lemma}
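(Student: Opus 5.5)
The plan is to decompose the centered posterior mean into an MLE part and a posterior-fluctuation part:
\[
N^{1/2}\big(\mathbb{E}_{\mathsf r_N}[g(\theta)]-g(\theta^*)\big)
= N^{1/2}\big(g(\hat\theta_N)-g(\theta^*)\big)
+ N^{1/2}\big(\mathbb{E}_{\mathsf r_N}[g(\theta)]-g(\hat\theta_N)\big).
\]
For the first term I will use the classical delta method. Assumption~\ref{assump:weighted-bvm} gives $N^{1/2}(\hat\theta_N-\theta^*)\rightsquigarrow\mathcal N(0,\mathfrak I(\theta^*)^{-1})$, and $g$ is differentiable at the interior point $\theta^*$. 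Hence the first term converges in distribution to $\mathcal N(0,\nabla g(\theta^*)^\top\mathfrak I(\theta^*)^{-1}\nabla g(\theta^*))$. By Slutsky's lemma, it then suffices to show that the second term tends to $0$ in $\mathfrak P^*$-probability.

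For the second term, substitute $\theta=\hat\theta_N+N^{-1/2}\tau$, so that
\[
N^{1/2}\big(\mathbb{E}_{\mathsf r_N}[g]-g(\hat\theta_N)\big)
=\int N^{1/2}\big(g(\hat\theta_N+N^{-1/2}\tau)-g(\hat\theta_N)\big)\,\mathsf r_N^*(\tau)\,d\tau ,
\]
where $\mathsf r_N^*$ is supported on $N^{1/2}(\Theta-\hat\theta_N)$. Define the remainder $R(\theta):=g(\theta)-g(\theta^*)-\nabla g(\theta^*)^\top(\theta-\theta^*)$, so that $|R(\theta)|\le \omega(\|\theta-\theta^*\|_2)\|\theta-\theta^*\|_2$ with $\omega(t)\to0$ as $t\to0$. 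Since $g$ is bounded and $\Theta$ is compact (Assumption~\ref{assump:consistency}(i)), we may also take $|R(\theta)|\le C\|\theta-\theta^*\|_2$ globally. The integrand then splits as
\[
\nabla g(\theta^*)^\top\tau + N^{1/2}\big(R(\hat\theta_N+N^{-1/2}\tau)-R(\hat\theta_N)\big).
\]
The linear part integrates to $\nabla g(\theta^*)^\top\int\tau(\mathsf r_N^*-\phi_{\mathfrak I})\,d\tau$, because $\phi_{\mathfrak I}$ has mean zero. Its absolute value is at most $\|\nabla g(\theta^*)\|_2\int(1+\|\tau\|_2^2)|\mathsf r_N^*-\phi_{\mathfrak I}|\,d\tau\to0$ almost surely, by \eqref{eq:BvM-Markov-Chain}.

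The main obstacle is controlling the remainder part uniformly in $\tau$. For $R(\hat\theta_N)$, the bound $N^{1/2}|R(\hat\theta_N)|\le\omega(\|\hat\theta_N-\theta^*\|_2)\,N^{1/2}\|\hat\theta_N-\theta^*\|_2$ tends to $0$ in probability, since $N^{1/2}(\hat\theta_N-\theta^*)$ is tight. For the other piece, use $\|\hat\theta_N+N^{-1/2}\tau-\theta^*\|_2\le\|\hat\theta_N-\theta^*\|_2+N^{-1/2}\|\tau\|_2$. Fix $M>0$ and split the integral into $\{\|\tau\|_2\le M\}$ and its complement.
\begin{itemize}
\item On $\{\|\tau\|_2\le M\}$, the argument of $R$ lies within $o_P(1)$ of $\theta^*$. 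The contribution is therefore bounded by $\sup_{t\le\delta_N}\omega(t)\cdot\big(N^{1/2}\|\hat\theta_N-\theta^*\|_2+M\big)$ with $\delta_N\to0$ in probability, which is $o_P(1)$.
\item On $\{\|\tau\|_2>M\}$, the global bound $|R|\le C\|\cdot\|_2$ gives a contribution of at most $C\int_{\|\tau\|_2>M}\big(N^{1/2}\|\hat\theta_N-\theta^*\|_2+\|\tau\|_2\big)\mathsf r_N^*(\tau)\,d\tau$. By \eqref{eq:BvM-Markov-Chain}, this is bounded by $C\big(O_P(1)+1\big)\big(\int_{\|\tau\|_2>M}(1+\|\tau\|_2^2)\phi_{\mathfrak I}\,d\tau+o(1)\big)$.
\end{itemize}
Letting $N\to\infty$ and then $M\to\infty$ shows that the remainder part is $o_P(1)$. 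Combining this with the linear part and applying Slutsky's lemma yields \eqref{eq:posterior-mean}.
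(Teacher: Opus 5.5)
Your proof is correct and follows essentially the same route as the paper: change to the local parameter $\tau$ and expand $g$ to first order at $\theta^*$. The linear terms are handled by $N^{1/2}(\hat\theta_N-\theta^*)\rightsquigarrow\mathcal N(0,\mathfrak I(\theta^*)^{-1})$ and the vanishing local-posterior mean, and the remainder is shown to be $o_{\mathfrak P^*}(1)$ by a near/far split. The differences are only technical. You isolate $g(\hat\theta_N)$ and invoke the classical delta method, split at the local scale $\|\tau\|_2\le M$, and control the tail with the global bound $|R(\theta)|\le C\|\theta-\theta^*\|_2$ plus Gaussian-tail uniform integrability (this needs only the first-moment part of the weight). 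The paper instead expands directly at $\theta^*$, splits at the macroscopic scale $N^{-1/2}\|B_N+\tau\|_2\le\delta$, and handles the far region by Chebyshev's inequality with the posterior second moment together with boundedness of $g$.
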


We defer the proof to \Cref{app:posterior-delta}. 
The next lemma records an immediate second-moment
implication; it is used 
later for the finite state-action SOC model.

\begin{lemma}
\label{lem:posterior-second-moment}
If Assumptions~\textup{\ref{assump:consistency}(i)} and 
\textup{\ref{assump:weighted-bvm}} hold, then
$
\mathbb{E}_{\theta\sim\mathsf{r}_N}
[\|\theta-\theta^*\|_2^2]
=
O_{\mathfrak{P}^*}(N^{-1})
$.
\end{lemma}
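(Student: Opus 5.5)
The plan is to rewrite the posterior second moment in the local parameter $\tau=N^{1/2}(\theta-\hat\theta_N)$ and then apply \Cref{assump:weighted-bvm}. Under the change of variables $\theta=\hat\theta_N+N^{-1/2}\tau$, the density $\mathsf r_N^*$ is the push-forward of $\mathsf r_N$; it is supported on $N^{1/2}(\Theta-\hat\theta_N)$ and extended by zero elsewhere. Then
\[
\theta-\theta^*=N^{-1/2}\tau+(\hat\theta_N-\theta^*),
\qquad
\|\theta-\theta^*\|_2^2\le 2N^{-1}\|\tau\|_2^2+2\|\hat\theta_N-\theta^*\|_2^2 .
\]
Taking posterior expectations gives
\[
\mathbb{E}_{\theta\sim\mathsf r_N}[\|\theta-\theta^*\|_2^2]
\le 2N^{-1}\int_{\mathbb R^s}\|\tau\|_2^2\,\mathsf r_N^*(\tau)\,\mathrm d\tau
+2\|\hat\theta_N-\theta^*\|_2^2 ,
\]
so it suffices to bound each term by $O_{\mathfrak P^*}(N^{-1})$.

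For the first term, I will write
\[
\int_{\mathbb R^s}\|\tau\|_2^2\,\mathsf r_N^*(\tau)\,\mathrm d\tau
\le \int_{\mathbb R^s}\|\tau\|_2^2\phi_{\mathfrak I}(\tau)\,\mathrm d\tau
+\int_{\mathbb R^s}(1+\|\tau\|_2^2)\,|\mathsf r_N^*(\tau)-\phi_{\mathfrak I}(\tau)|\,\mathrm d\tau .
\]
The first integral on the right equals $\mathrm{tr}(\mathfrak I(\theta^*)^{-1})<\infty$, which is finite because $\mathfrak I(\theta^*)$ is nonsingular. By \eqref{eq:BvM-Markov-Chain}, the second integral tends to $0$ $\mathfrak P^*$-almost surely. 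Hence the local second moment is $\mathfrak P^*$-almost surely bounded for all large $N$, which gives $O_{\mathfrak P^*}(1)$, and the first term is $O_{\mathfrak P^*}(N^{-1})$.

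For the second term, \Cref{assump:weighted-bvm} gives $N^{1/2}(\hat\theta_N-\theta^*)\rightsquigarrow\mathcal N(0,\mathfrak I(\theta^*)^{-1})$. A weakly convergent sequence is uniformly tight, so $N^{1/2}(\hat\theta_N-\theta^*)=O_{\mathfrak P^*}(1)$. The continuous mapping theorem then gives $N\|\hat\theta_N-\theta^*\|_2^2=O_{\mathfrak P^*}(1)$. Adding the two bounds yields the claim. \Cref{assump:consistency}(i) (compactness of $\Theta$) is needed only to ensure that all moments are finite and the change of variables is well defined.

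The main obstacle is step two. The point is that the weight $1+\|\tau\|_2^2$ in \eqref{eq:BvM-Markov-Chain} is exactly what controls the second moment. Total-variation convergence alone would not suffice, since the mass of $\mathsf r_N^*$ could escape to infinity. With the weighted assumption, however, this step is immediate. The only remaining care is to combine the almost-sure statement with the in-probability statement for $\hat\theta_N$, and this works because almost-sure boundedness for all large $N$ implies stochastic boundedness.
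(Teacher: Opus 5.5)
Your proposal is correct and follows essentially the same route as the paper: change variables to the local parameter, apply $\|a+b\|_2^2\le 2\|a\|_2^2+2\|b\|_2^2$, get tightness of $N^{1/2}(\hat\theta_N-\theta^*)$ from its normal limit, and get stochastic boundedness of the local posterior second moment from the weighted Bernstein--von Mises condition. Your explicit bound via $\mathrm{tr}(\mathfrak I(\theta^*)^{-1})$ plus the weighted $L^1$ distance simply spells out the step the paper states in one line.
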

We defer the proof to \Cref{app:posterior-second-moment}.

\subsection{Asymptotics of the contextual optimal value}
\label{subsect:asymptotics}

This section establishes Bernstein--von Mises-type asymptotics for the
data-driven contextual optimal value. Our asymptotic results are based on
the posterior Bernstein--von Mises limit in
\eqref{eq:posterior-mean}.

We recall that the random vector
$\xi_t$, conditioned on $\eta_t$, has density $f(\cdot | \eta_t, \theta)$
and $\{\eta_t\}_{t\geq1}$ evolves according to a (parameter-free)
Markov kernel on $\mathcal{H}$.

\begin{assumption}
\label{assume:infinite-horizon-2}
{\rm (i)}
There exists $\epsilon >0$ such that $B_\epsilon(\theta^*)\subseteq \Theta$,
and for  all $(\xi,\eta) \in \Xi \times \mathcal{H}$, 
$f(\xi | \eta, \cdot)$
is continuously differentiable on $B_\epsilon(\theta^*)$.
{\rm (ii)}
For all $\eta \in \mathcal{H}$, 
$
\|s(\cdot| \eta,\theta)\|_2^2
f(\cdot|\eta, \theta)
$,
$\theta \in \overline{B}_{\epsilon/2}(\theta^*)$, 
is dominated by a Lebesgue-integrable function on $\Xi$,
where
\begin{equation}\label{score}
 s(\xi| \eta,\theta)
\coloneqq  
\nabla_\theta \log f(\xi| \eta,\theta).
\end{equation}
{\rm (iii)}
The policy $\pi^*$ of the original problem \eqref{bell-inf} 
is unique.
\end{assumption}

We verify \Cref{assume:infinite-horizon-2}(i)--(ii) for a regression model.

\begin{example}
Let $\Xi=\mathbb{R}^d$ and let $\mathcal{H}\subseteq\mathbb{R}^q$ be
finite. 
Suppose that $B_\epsilon(\theta^*)\subseteq\Theta$ for some
$\epsilon>0$. 
For $A\in\mathbb{R}^{d\times q}$ and $b\in\mathbb{R}^d$, let
$\theta=(A,b)\in\mathbb{R}^{d\times q}\times\mathbb{R}^d$, where the
parameter space is equipped with the norm
$\|\theta\|_2\coloneqq
(\|A\|_{\mathrm F}^2+\|b\|_2^2)^{1/2}$.
Here $\|\cdot\|_{\mathrm{F}}$ is the Frobenius norm.
 Define
$\chi(\eta,\theta)\coloneqq A\eta+b$. For a symmetric positive definite
matrix $\Sigma\in\mathbb{R}^{d\times d}$, consider
\begin{equation}\label{cont-1}
    \xi_t=\chi(\eta_t,\theta)+\varepsilon_t,
    \qquad
    \varepsilon_t\sim\mathcal{N}(0,\Sigma),
\end{equation}
where $\varepsilon_t$ is independent of $\eta_t$.
Then $\xi_t|\eta_t\sim \N(\chi(\eta_t,\theta),\Sigma)$ and
\[
f(\xi_t|\eta_t,\theta)=\frac{1}{(2\pi)^{d/2}|\Sigma|^{1/2}}
\exp\big\{-(\xi_t-\chi(\eta_t,\theta))^\top
 \Sigma^{-1}(\xi_t-\chi(\eta_t,\theta))/2\big\}.
 \]
Using the above expression of $f$, we find that
\Cref{assume:infinite-horizon-2}(i) is satisfied.
Now we verify \Cref{assume:infinite-horizon-2}(ii).
We define
$\ell(\xi,\eta,\theta)\coloneqq  \xi-A\eta-b$. Hence
\[
\nabla_b \log f(\xi|\eta,\theta)=\Sigma^{-1}\ell(\xi,\eta,\theta),
\qquad
\nabla_A \log f(\xi|\eta,\theta)=\Sigma^{-1}\ell(\xi,\eta,\theta)\,\eta^\top.
\]
Since $\overline{B}_{\epsilon/2}(\theta^*)$ is compact,
and $\mathcal{H}$ is finite, there exists a constant
$C > 0$ such that, 
for every $\eta\in\mathcal{H}$ and 
$\theta\in \overline{B}_{\epsilon/2}(\theta^*)$, 
\[
\|s(\xi\mid\eta,\theta)\|_2^2 f(\xi\mid\eta,\theta)
\le
C(1+\|\xi\|_2^2)
\exp(-\|\xi\|_2^2/C).
\]
The right-hand side is Lebesgue integrable on $\mathbb{R}^d$. Hence,
\Cref{assume:infinite-horizon-2}(ii) holds.
\end{example}

Let \Cref{assump:consistency,assume:infinite-horizon-1,assume:infinite-horizon-2}
hold.
For fixed initial values $(x_1,\eta_1)\in\mathcal{X}\times\mathcal{H}$, we define the optimal
discounted cost, 
$W^{\pi^*}(x_1,\eta_1)$, and the cumulative score up to time $T$,
$S_{\theta,T}$, by
\begin{equation}\label{funcW}
W^{\pi^*}(x_1,\eta_1)\coloneqq \sum_{t=1}^{\infty}\gamma^{t-1}
c\big(x_t,\pi^*(x_t,\eta_t),\xi_t\big),
\quad
\text{and}
\quad 
S_{\theta,T}\coloneqq \sum_{t=1}^{T}s(\xi_t|\eta_t,\theta).
\end{equation}
To connect the random total cost
\(W^{\pi^*}(x_1,\eta_1)\) to a value function, and to study how this
value depends on the model parameter, we introduce the
policy-evaluation Bellman operator associated with the fixed policy
\(\pi^*\) under parameter \(\theta\).
For $(x,\eta,\theta)\in \mathcal{X} \times \mathcal{H} \times \Theta$, 
and $V \in \mathbb{B}(\mathcal{X} \times \mathcal{H})$,
we define the Bellman operator
\begin{equation*}
[\mathcal{T}_{\theta}^{\pi^*} V](x,\eta)
\coloneqq  
\bbe_{\xi\sim f(\cdot| \eta, \theta)}
\Big[
c(x,\pi^*(x,\eta),\xi)+\gamma\,
\bbe_{\eta'|\eta}
\big[V(F(x,\pi^*(x,\eta),\xi),\eta')\big]
\Big].
\end{equation*}
Since $\gamma \in (0,1)$,
$\mathcal{T}_{\theta}^{\pi^*}$ is a $\gamma$-contraction operator.
This ensures that the fixed point equation
\(V_{\theta}^{\pi^*}=\mathcal{T}_{\theta}^{\pi^*}
V_{\theta}^{\pi^*}\) has a unique solution, which we denote by
\(V_{\theta}^{\pi^*}\).
Then, \Cref{assump:consistency,assume:infinite-horizon-1,assume:infinite-horizon-2} ensure
$$V^*=V_{\theta^*}^{\pi^*}
\quad \text{and} \quad 
V_{\theta}^{\pi^*}(x_1,\eta_1)=\mathbb{E}_{\mathbb{P}_\theta}[W^{\pi^*}(x_1,\eta_1)].$$

Our next lemma shows that the expected long-run discounted performance under the optimal policy is a differentiable function of the parameters, and it provides an explicit gradient formula.

\begin{lemma}
\label{lem:differentiability-cost}
Let \Cref{assump:consistency,assume:infinite-horizon-1,assume:infinite-horizon-2} hold, 
and let $(x_1, \eta_1) \in \mathcal{X} \times \mathcal{H}$.
Then $g(\theta) \coloneqq  V_{\theta}^{\pi^*}(x_1, \eta_1) $
is bounded and differentiable in a neighborhood of $\theta^*$ with 
$$\nabla g(\theta) = 
\mathbb{E}_{\mathbb{P}_\theta}
\bigg[
 \sum_{t=1}^{\infty}  \gamma^{t-1}
c(x_t,\pi^*(x_t,\eta_t),\xi_t) S_{\theta, t}
\bigg].$$
\end{lemma}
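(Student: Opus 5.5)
Boundedness is immediate. Since $c$ is bounded, say $|c|\le M$, the fixed point $V_\theta^{\pi^*}$ of the $\gamma$-contraction $\mathcal{T}_\theta^{\pi^*}$ satisfies $\|V_\theta^{\pi^*}\|_\infty\le M/(1-\gamma)$ for every $\theta\in\Theta$. Hence $|g(\theta)|\le M/(1-\gamma)$.

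For differentiability I will use a truncation plus likelihood-ratio (score-function) argument. Write $g(\theta)=\mathbb{E}_{\mathbb{P}_\theta}[W^{\pi^*}(x_1,\eta_1)]=\sum_{t=1}^\infty\gamma^{t-1}g_t(\theta)$ with $g_t(\theta)\coloneqq\mathbb{E}_{\mathbb{P}_\theta}[c(x_t,\pi^*(x_t,\eta_t),\xi_t)]$. Since $\pi^*$ is fixed and the law of $\{\eta_t\}$ does not depend on $\theta$, the summand $c(x_t,\pi^*(x_t,\eta_t),\xi_t)$ is a fixed bounded measurable function $\Phi_t(\xi_{[t]},\eta_{[t]})$ of $\xi_1,\dots,\xi_t$ and $\eta_1,\dots,\eta_t$. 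Therefore
\[
g_t(\theta)=\sum_{\eta_{[t]}}\Big(\prod_{i=2}^t\varpi_{\eta_{i-1},\eta_i}\Big)\int_{\Xi^t}\Phi_t\,\prod_{i=1}^t f(\xi_i|\eta_i,\theta)\,\mathrm{d}\xi_1\cdots\mathrm{d}\xi_t .
\]
This is a finite sum over $\mathcal{H}^t$, so it suffices to differentiate each integral.

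The derivative of the product $\prod_{i=1}^t f(\xi_i|\eta_i,\theta)$ is $\big(\prod_i f\big)\sum_{i=1}^t s(\xi_i|\eta_i,\theta)=\big(\prod_i f\big)S_{\theta,t}$, using \Cref{assume:infinite-horizon-2}(i) and positivity of $f$ from \Cref{assump:consistency}(iii). To pass the gradient under the integral on $\overline{B}_{\epsilon/2}(\theta^*)$, I need an integrable dominating function for $|\Phi_t|\,\|\nabla_\theta f(\xi_i|\eta_i,\theta)\|_2\prod_{j\ne i}f(\xi_j|\eta_j,\theta)$, uniformly in $\theta$. Since $\|\nabla_\theta f\|_2=\|s\|_2 f\le\tfrac12(1+\|s\|_2^2)f$, \Cref{assume:infinite-horizon-2}(ii) dominates $\|s\|_2^2 f$. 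For the factors $f$ themselves I use continuity in $\theta$ together with Scheffé's lemma, as in \Cref{lem:predictive-L1}. This gives uniform integrability in $L^1$, which is enough for a mean-value/Vitali-type interchange even without a pointwise dominating function. The outcome is $\nabla g_t(\theta)=\mathbb{E}_{\mathbb{P}_\theta}[c(x_t,\pi^*(x_t,\eta_t),\xi_t)S_{\theta,t}]$.

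Next I sum over $t$. Cauchy--Schwarz gives $\|\nabla g_t(\theta)\|_2\le M\,(\mathbb{E}_{\mathbb{P}_\theta}\|S_{\theta,t}\|_2^2)^{1/2}$. The scores satisfy $\mathbb{E}[s(\xi_i|\eta_i,\theta)|\eta_{[t]},\xi_{[i-1]}]=0$, which follows from differentiating $\int f=1$ using the same domination. Hence the terms of $S_{\theta,t}$ form a martingale difference sequence, the cross terms vanish, and $\mathbb{E}_{\mathbb{P}_\theta}\|S_{\theta,t}\|_2^2\le tK$, where $K\coloneqq\max_\eta\sup_{\theta}\int\|s\|_2^2f$ is finite by (ii) and finiteness of $\mathcal{H}$. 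Thus $\sum_t\gamma^{t-1}\sup_{\theta\in\overline{B}_{\epsilon/2}(\theta^*)}\|\nabla g_t(\theta)\|_2\le MK^{1/2}\sum_t\gamma^{t-1}t^{1/2}<\infty$. This uniform convergence of the differentiated series, together with pointwise convergence of $\sum\gamma^{t-1}g_t$, yields differentiability of $g$ on $B_{\epsilon/2}(\theta^*)$ with $\nabla g=\sum_t\gamma^{t-1}\nabla g_t$. Exchanging sum and expectation by Fubini (again justified by the $t^{1/2}\gamma^{t-1}$ bound) gives the stated formula.

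The main obstacle is the interchange of $\nabla_\theta$ and $\int_{\Xi^t}$ in the first step. Assumption (ii) controls $\|s\|_2^2f$ in a single coordinate, but the product density requires uniform-in-$\theta$ control of the remaining factors. I will handle it first by the Scheffé/generalized dominated convergence route: the $f(\xi_j|\eta_j,\theta)$ converge in $L^1$ as $\theta$ varies, and so do their products. If that proves delicate, the fallback is to show differentiability of each $g_t$ inductively via the one-step operator $\mathcal{T}_\theta^{\pi^*}$. There only a single integral against $f(\cdot|\eta,\theta)$ with a bounded integrand must be differentiated, and (ii) suffices directly.
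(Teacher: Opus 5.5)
Your proposal is correct and follows essentially the same route as the paper: truncate the discounted series, differentiate each finite-horizon piece by the likelihood-ratio (score) formula, and pass to the limit using a summable bound on the derivatives that is uniform on a ball around $\theta^*$. The differences are minor. You differentiate stage by stage, so you never have to remove the future scores $s(\xi_{t'}|\eta_{t'},\theta)$, $t'>t$, from $\mathbb{E}[W_T S_{\theta,T}]$ as the paper does. Your martingale-orthogonality bound gives the weights $t^{1/2}\gamma^{t-1}$ rather than the paper's $t\gamma^{t-1}$. You also justify the interchange for the product density explicitly (Scheff\'e plus a generalized dominated convergence argument), where the paper simply cites Theorem~9.56 of \cite{SDR}.
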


\begin{proof}{Proof}
Since $V_{\theta}^{\pi^*}$ solves the fixed point equation
$V_{\theta}^{\pi^*} = \mathcal{T}_{\theta}^{\pi^*} V_{\theta}^{\pi^*}$, 
$\gamma \in (0,1)$, 
and the one-stage cost $c$ is bounded, the value function $V_{\theta}^{\pi^*}$  is bounded.
Hence $g$ is bounded.
To verify the differentiability, 
we use the identity $g(\theta) = \mathbb{E}_{\mathbb{P}_\theta}[W^{\pi^*}(x_1,\eta_1)]$.
For $T \in \mathbb{N}$, we define
$\boldsymbol{c}_t \coloneqq  
c(x_t,\pi^*(x_t,\eta_t),\xi_t)$, 
$$
\boldsymbol{s}_t \coloneqq  
s(\xi_t| \eta_t,\theta), \quad 
W_T(x_1, \eta_1)\coloneqq \sum_{t=1}^{T}\gamma^{t-1}\,
\boldsymbol{c}_t, 
\quad 
\text{and}
\quad 
g_T(\theta)\coloneqq \mathbb{E}_{\mathbb{P}_\theta}[W_T(x_1, \eta_1)].
$$
We show that $g_T$ is differentiable on $B_{\varepsilon/2}(\theta^*)$
with gradient $\nabla g_T(\theta)
=\mathbb{E}_{\mathbb{P}_\theta}[W_T(x_1, \eta_1) S_{\theta,T}]$.
Since $\{\eta_t\}_{t\geq1}$ is parameter free,
the likelihood of $\{(\xi_{t},\eta_{t})\}_{1\leq t \leq T}$ factors as
$\prod_{t=1}^T f(\xi_t|\eta_t,\theta)$ times a factor independent of
$\theta$. Hence
$$
\nabla_\theta \log \Big(\prod_{t=1}^T f(\xi_t|\eta_t,\theta)\Big)
=\sum_{t=1}^T\nabla_\theta \log f(\xi_t|\eta_t,\theta)
=S_{\theta,T}.
$$
Applying Theorem~9.56 in \cite{SDR}, we obtain 
$
\nabla g_T(\theta)=\mathbb{E}_{\mathbb{P}_\theta}[W_T(x_1, \eta_1)  S_{\theta,T}]
$.
We have  $\mathbb{E}_{\mathbb{P}_\theta}[\boldsymbol{s}_{t} | \eta_{[t]},
\xi_{[t-1]}] = 0$.
For $t^\prime > t$, 
$
\mathbb{E}_{\mathbb{P}_\theta}[
\boldsymbol{c}_t \boldsymbol{s}_{t^\prime}
]
= 
\mathbb{E}_{\mathbb{P}_\theta}
[
\boldsymbol{c}_t \mathbb{E}_{\mathbb{P}_\theta}[\boldsymbol{s}_{t^\prime} | \eta_{[t^\prime]},
\xi_{[t^\prime-1]}]
]
= 0$.
We have 
$$
\nabla g_T(\theta) = \sum_{t=1}^{T}  \sum_{t^\prime=1}^{T}\gamma^{t-1}
\mathbb{E}_{\mathbb{P}_\theta}[
\boldsymbol{c}_t \boldsymbol{s}_{t^\prime}
]
= \sum_{t=1}^{T}  \gamma^{t-1}
\mathbb{E}_{\mathbb{P}_\theta}[
\boldsymbol{c}_t S_{\theta,t}
]
.$$

Since $c$ is bounded, there exists a constant $C>0$
such that $|c(x, u, \xi)| \leq C$ for all $(x, u, \xi) \in \mathcal{X} \times \mathcal{U} \times \Xi$.
Hence, for all $\theta\in \Theta$,
$$
\abs{g(\theta)-g_T(\theta)}
\le \mathbb{E}_{\mathbb{P}_\theta}\big[|W^{\pi^*}(x_1,\eta_1) -W_T(x_1, \eta_1) |\big]
\le C(1-\gamma)^{-1}\gamma^{T},
$$
implying that $g_T\to g$ uniformly on $\Theta$.

Next, we show that $\nabla g_T$ converges to $h$ uniformly on $\overline{B}_{\epsilon/4}(\theta^*)$, where
$h(\theta)\coloneqq   \sum_{t=1}^{\infty}  \gamma^{t-1}
\mathbb{E}_{\mathbb{P}_\theta}[
\boldsymbol{c}_tS_{\theta, t}
]$.
We have
$$
\|\nabla g_T(\theta) - h(\theta)\|_2
\leq 
C\sum_{t=T+1}^\infty
\gamma^{t-1}
\mathbb{E}_{\mathbb{P}_\theta}
[
\|S_{\theta, t}\|_2
]
\leq 
C \sup_{t \geq 1}\mathbb{E}_{\mathbb{P}_\theta}\big[
\|s(\xi_t| \eta_t,\theta)\|_2\big] \sum_{t=T+1}^\infty
t \gamma^{t-1}.
$$
Hence, $\nabla g_T\to h$ uniformly on $\overline{B}_{\epsilon/4}(\theta^*)$, 
yielding $\nabla g(\theta)=h(\theta)$.
\end{proof}

\begin{remark}
\label{rem:sensitivity-fixed-point}
Let \Cref{assump:consistency,assume:infinite-horizon-1,assume:infinite-horizon-2} hold. 
The differentiability result in
\Cref{lem:differentiability-cost} can also be interpreted through a Bellman identity.
Fix
\((x_1,\eta_1)\in \mathcal{X}\times\mathcal{H}\).
The Bellman equation gives
\[
V_{\theta}^{\pi^*}(x_1,\eta_1)
=
\bbe_{\xi\sim f(\cdot|\eta_1,\theta)}
\Big[
c(x_1,\pi^*(x_1,\eta_1),\xi)
+\gamma
\bbe_{\eta'|\eta_1}
\big[
V_{\theta}^{\pi^*}
(F(x_1,\pi^*(x_1,\eta_1),\xi),\eta')
\big]
\Big].
\]
\Cref{lem:differentiability-cost} justifies
differentiating this equation with respect to \(\theta\). Combined with
\(\nabla_{\theta} f(\xi|\eta_1,\theta)
=f(\xi|\eta_1,\theta)s(\xi|\eta_1,\theta)\), and
Theorem~9.56 in \cite{SDR}, we obtain
\begin{equation}\label{eq:grad-fixed-point}
\begin{aligned}
\nabla_{\theta} V_{\theta}^{\pi^*}(x_1,\eta_1)
&=
\bbe_{\xi\sim f(\cdot| \eta_1,\theta)}
\Big[
s(\xi|\eta_1,\theta)
\Big(
c(x_1,\pi^*(x_1,\eta_1),\xi)
\\
&\qquad\qquad
+\gamma
\bbe_{\eta'|\eta_1}
\big[
V_{\theta}^{\pi^*}
(F(x_1,\pi^*(x_1,\eta_1),\xi),\eta')
\big]
\Big)
\Big]
\\
&\quad
+\gamma
\bbe_{\xi\sim f(\cdot| \eta_1,\theta)}
\Big[
\bbe_{\eta'|\eta_1}
\big[
\nabla_{\theta} V_{\theta}^{\pi^*}
(F(x_1,\pi^*(x_1,\eta_1),\xi),\eta')
\big]
\Big].
\end{aligned}
\end{equation}
\end{remark}

We are ready to formulate a Bernstein--von Mises-type theorem for the
data-driven contextual optimal value. For a fixed initial values
\((x_1,\eta_1)\in \mathcal{X}\times\mathcal{H}\), the theorem establishes
asymptotic normality of
\(V_N^*(x_1,\eta_1)\) after centering at the true optimal value
\(V^*(x_1,\eta_1)\) and scaling by \(N^{1/2}\).
We define the Fisher information matrices
\begin{equation}
\label{eq:fisher-matrices}
I(\theta;\eta)
\coloneqq  
\mathbb{E}_{\xi \sim f(\cdot | \eta, \theta)}\big[
s(\xi| \eta,\theta) s(\xi| \eta,\theta)^\top
\big]
\quad 
\text{and} \quad 
I(\theta)
\coloneqq  
\sum_{h\in \HH}\nu_\eta(h) I(\theta;h).
\end{equation}

Recalling the empirical log-likelihood \(\lambda_N\) from
\eqref{eq:phi_N}, let \(\hat\theta_N\) be one of its maximizers.
To state the posterior approximation in the notation of the contextual
model, let \(\mathsf p_N^*\) denote the density of the local parameter
\(\tau=N^{1/2}(\theta-\hat\theta_N)\).

\begin{assumption}[Contextual weighted Bernstein--von Mises approximation]
\label{assump:contextual-weighted-bvm}
The matrix $I(\theta^*)$ is nonsingular,
$
N^{1/2}(\hat\theta_N-\theta^*)
\overset{\mathbb P^*}{\rightsquigarrow}
\mathcal N(0,I(\theta^*)^{-1})
$,
and, 
\begin{equation}
\label{eq:BvM-contextual-local-posterior}
\lim_{N\to\infty}
\int_{\mathbb R^s}
(1+\|\tau\|_2^2)
|\mathsf p_N^*(\tau)-\phi_I(\tau)|
\,\mathrm d\tau
=0,
\quad \mathbb P^*\text{-almost surely},
\end{equation}
where
 $\phi_I$ is  the density of
$\mathcal N(0,I(\theta^*)^{-1})$.
\end{assumption}
This is the contextual analogue of the generic Markov-chain posterior
approximation formulated in \Cref{assump:weighted-bvm}.

\begin{theorem}
\label{thm:BvM-contextual-optimal-value}
Let \Cref{assump:consistency,assume:infinite-horizon-1,assume:infinite-horizon-2,assump:contextual-weighted-bvm} hold,
let $(x_1, \eta_1) \in \mathcal{X} \times \mathcal{H}$ be fixed.
Moreover, suppose that
\begin{equation}
\label{eq:asymptotic-expansion}
N^{1/2}
\big(
V_N^*(x_1,\eta_1)
-
 \mathbb{E}_{\theta \sim \mathsf{p}_N}
\big[
V_{\theta}^{\pi^*}(x_1, \eta_1) 
\big]
\big)
\xrightarrow{\mathbb{P}^*}
0.
\end{equation}
Then
\begin{equation}
\label{eq:BvM-limit}
N^{1/2}\big(V_N^*(x_1,\eta_1)-V^*(x_1,\eta_1)\big)
\overset{\mathbb{P}^*}{\rightsquigarrow}
\mathcal{N}\Big(
0,\,
\mathbb{E}_{\mathbb{P}_{\theta^*}}[H_{\theta^*}(x_1,\eta_1)]^\top
I(\theta^*)^{-1}
\mathbb{E}_{\mathbb{P}_{\theta^*}}[H_{\theta^*}(x_1,\eta_1)]
\Big),
\end{equation}
where
\begin{equation}
H_{\theta^*}(x_1,\eta_1)
\coloneqq  
\sum_{t=1}^{\infty}\gamma^{t-1}\,
c(x_t,\pi^*(x_t,\eta_t),\xi_t)\,S_{\theta^*,t}.
\end{equation}
\end{theorem}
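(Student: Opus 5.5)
The argument splits the centered and scaled Bayesian value into two pieces and treats each with an earlier result. Write
\[
N^{1/2}\big(V_N^*(x_1,\eta_1)-V^*(x_1,\eta_1)\big)
=
N^{1/2}\big(V_N^*(x_1,\eta_1)-\mathbb{E}_{\theta\sim\mathsf p_N}[g(\theta)]\big)
+
N^{1/2}\big(\mathbb{E}_{\theta\sim\mathsf p_N}[g(\theta)]-g(\theta^*)\big),
\]
with $g(\theta)\coloneqq V_\theta^{\pi^*}(x_1,\eta_1)$. This uses $V^*=V_{\theta^*}^{\pi^*}$, so that $V^*(x_1,\eta_1)=g(\theta^*)$. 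By hypothesis \eqref{eq:asymptotic-expansion}, the first term tends to $0$ in $\mathbb{P}^*$-probability. Once the second term is shown to converge in distribution to the stated normal limit, Slutsky's lemma gives \eqref{eq:BvM-limit}.

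For the second term I would apply \Cref{lem:posterior-delta} in the contextual setting. The Markov chain is $Z_t=(\xi_t,\eta_t)$ with transition density $h(z'|z,\theta)=\varpi_{\eta,\eta'}f(\xi'|\eta',\theta)$. Its initial density is $f(\xi_1|\eta_1,\theta)$ with $\eta_1$ deterministic, so $\mathsf h_N$ coincides with $\sff_N$ and $\mathsf r_N=\mathsf p_N$. The score is $\nabla_\theta\log h(Z_2|Z_1,\theta)=s(\xi_2|\eta_2,\theta)$ because $\varpi$ is parameter free. Under stationarity, $\eta_2\sim\nu_\eta$ and $\xi_2|\eta_2\sim f(\cdot|\eta_2,\theta^*)$, so \eqref{eq:fisher-matrix-general-setting} reduces to $I(\theta^*)$ in \eqref{eq:fisher-matrices}. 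Under correct specification, $\mathfrak P^*=\mathbb P^*$. Hence \Cref{assump:contextual-weighted-bvm}, together with $\theta^*\in\mathrm{int}(\Theta)$ from \Cref{assume:infinite-horizon-1}, is exactly \Cref{assump:weighted-bvm} in this instance, and \Cref{assump:consistency}(i) holds.

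\Cref{lem:differentiability-cost} supplies the remaining regularity. It shows that $g$ is bounded on $\Theta$ and differentiable near $\theta^*$ with
\[
\nabla g(\theta^*)=\mathbb{E}_{\mathbb P_{\theta^*}}\Big[\sum_{t\ge1}\gamma^{t-1}c(x_t,\pi^*(x_t,\eta_t),\xi_t)S_{\theta^*,t}\Big]=\mathbb{E}_{\mathbb P_{\theta^*}}[H_{\theta^*}(x_1,\eta_1)].
\]
Applying \Cref{lem:posterior-delta} then yields
\[
N^{1/2}\big(\mathbb{E}_{\mathsf p_N}[g]-g(\theta^*)\big)\overset{\mathbb P^*}{\rightsquigarrow}\mathcal N\big(0,\nabla g(\theta^*)^\top I(\theta^*)^{-1}\nabla g(\theta^*)\big),
\]
which is the claimed variance.

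The main obstacle I expect is identifying the contextual posterior with the generic Markov-chain posterior and matching the Fisher information. This rests on three points: the $\varpi$ factors cancel in the posterior, the initial state enters only through $f(\xi_1|\eta_1,\theta)$, and the stationary expectation over $(Z_1,Z_2)$ collapses to the $\nu_\eta$-weighted $I(\theta^*;\eta)$. A smaller point is that \Cref{lem:posterior-delta} needs only differentiability at $\theta^*$, which \Cref{lem:differentiability-cost} provides, so no extra smoothness of $g$ is required.
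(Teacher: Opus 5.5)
Your proposal is correct and follows the paper's proof essentially step for step: you use the same decomposition with Slutsky's lemma. You also make the same identification of the contextual chain $Z_t=(\xi_t,\eta_t)$ with the generic Markov-chain setting, giving $\mathsf r_N=\mathsf p_N$, $\mathfrak P^*=\mathbb P^*$ and $\mathfrak I(\theta^*)=I(\theta^*)$, and you feed the gradient from \Cref{lem:differentiability-cost} into \Cref{lem:posterior-delta} in the same way. The one detail the paper states explicitly that you leave implicit is this: because the $\varpi$ factors do not depend on $\theta$, the generic maximum likelihood estimator coincides with the maximizer $\hat\theta_N$ of $\lambda_N$, and hence $\mathsf r_N^*=\mathsf p_N^*$.
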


Before we establish \Cref{thm:BvM-contextual-optimal-value}, we comment on
its hypotheses and its assertion.
The condition \eqref{eq:asymptotic-expansion} ensures
that the asymptotic distribution of
$N^{1/2}
\big(V_N^*(x_1,\eta_1)- V^*(x_1,\eta_1)\big)$
equals that of 
\begin{equation}
\label{eq:asymptotic-equivalence}
N^{1/2}
\big(
 \mathbb{E}_{\theta \sim \mathsf{p}_N}
\big[V_{\theta}^{\pi^*}(x_1,\eta_1)
\big]
- 
V^*(x_1,\eta_1)
\big).
\end{equation}
This statement is with respect to the probability law
$\mathbb{P}^*$ of the data-generating process.
Recalling  \(V^*(x_1,\eta_1)=V_{\theta^*}^{\pi^*}(x_1,\eta_1)\)
under a correctly specified,  identifiable model,
the condition \eqref{eq:asymptotic-expansion}  requires 
that the error between the Bayesian optimal value, 
$V_N^*(x_1, \eta_1)$, and the posterior
mean of  $\theta\mapsto V_{\theta}^{\pi^*}(x_1,\eta_1)$
is asymptotically negligible at the $N^{-1/2}$ scale.
If $\pi_N^*$ is a Bayesian optimal policy, then for
each $(x,\eta) \in \mathcal{X} \times \mathcal{H}$,
\begin{equation}
\label{eq:VNVtheta-identity}
\begin{aligned}
& V_N^*(x,\eta)
-
 \mathbb{E}_{\theta \sim \mathsf{p}_N}
\big[
V_{\theta}^{\pi^*}(x, \eta) 
\big]
\\
& \quad =
\bbe_{\mathsf{p}_N| \eta}\Big[
c(x,\pi_N^*(x,\eta),\xi)+\gamma\,
\bbe_{\eta'|\eta}
\big[V_N^*(F(x,\pi_N^*(x,\eta),\xi),\eta')\big]
\Big]
\\
& \quad \quad -
\bbe_{\mathsf{p}_N| \eta}\Big[
c(x,\pi^*(x,\eta),\xi)+\gamma\,
\bbe_{\eta'|\eta}
\big[V_\theta^{\pi^*}(F(x,\pi^*(x,\eta),\xi),\eta')\big]
\Big].
\end{aligned}
\end{equation}
Hence  \(V_N^*(x,\eta)\) differs from
\(
 \mathbb{E}_{\theta \sim \mathsf{p}_N}
\big[
V_{\theta}^{\pi^*}(x, \eta) 
\big]
\)  in that:  (i) it evaluates the objective under the Bayesian optimal policy \(\pi_N^*\),
whereas the posterior mean
\(\mathbb{E}_{\theta\sim \mathsf{p}_N}[V_\theta^{\pi^*}(x,\eta)]\)
uses the true optimal policy \(\pi^*\), and (ii) it uses \(V_N^*\) in
place of \(V_\theta^{\pi^*}\) in \eqref{eq:VNVtheta-identity}. The 
asymptotic equivalence in \eqref{eq:asymptotic-expansion} 
partially resembles the finite-dimensional, single-stage
stochastic optimization setting; see eq.\ (5.24) in \cite{SDR}. A key difference
here is that \eqref{eq:VNVtheta-identity} involves two distinct value functions
on the right-hand side. 
After proving \Cref{thm:BvM-contextual-optimal-value}, we verify
\eqref{eq:asymptotic-expansion} in the finite state-action setting in
\Cref{subsec:finite-state-action-verification}.

Let us compare the asymptotic nature of \Cref{thm:consistency} with that of \Cref{thm:BvM-contextual-optimal-value}.
The posterior consistency in \Cref{thm:consistency} is a two-level asymptotic: for
$\mathbb{P}^*$-almost every realized data path (``outer'' randomness), the conditional
posterior law of  $\theta_N\sim\mathsf{P}_N$ (``inner'' randomness)
concentrates on $\Theta^*$.
In contrast, \Cref{thm:BvM-contextual-optimal-value} is a one-level statement
under $\mathbb{P}^*$: $V_N^*(x_1,\eta_1)$ and
$\mathbb{E}_{\theta\sim\mathsf{p}_N}[V_\theta^{\pi^*}(x,\eta)]$ 
already integrate out the posterior uncertainty in $\theta$ and, hence, are
functions only of the observed data $\{(\xi_i,\eta_i)\}_{i=1}^N$.
\Cref{thm:BvM-contextual-optimal-value} 
establishes asymptotics for the Bayesian optimal value, 
rather than convergence of the posterior distribution itself.

The asymptotic variance in \Cref{thm:BvM-contextual-optimal-value} involves $\mathbb{E}_{\mathbb{P}_{\theta^*}}[H_{\theta^*}(x_1,\eta_1)]$.
This vector captures the first-order sensitivity of the infinite-horizon discounted cost,
evaluated under the true optimal policy, to perturbations of the 
model. 
Moreover, the matrix $I(\theta^*)^{-1}$ in \eqref{eq:BvM-limit} describes how statistical uncertainty
about $\theta$ propagates into uncertainty about the optimal value.

\Cref{thm:BvM-contextual-optimal-value}
yields Bernstein--von Mises asymptotics for each fixed initial condition
\((x_1,\eta_1) \in \mathcal{X}\times\mathcal{H}\); obtaining limits that hold uniformly over
\(\mathcal{X}\times\mathcal{H}\) is substantially more delicate. For 
uniform asymptotics in finite-horizon stochastic optimal control without
contextual states, we refer the reader to \cite{Milz2025}.

\begin{proof}{Proof of \Cref{thm:BvM-contextual-optimal-value}}
We first specialize the generic Markov-chain formulation in
\Cref{lem:posterior-delta} to the contextual data process. Set
\(Z_t=(\xi_t,\eta_t)\), \(\mathcal Z=\Xi\times\HH\), and \(r=p\).
Since \(\eta_1\) is deterministic, the \(\theta\)-dependent initial
likelihood contribution is
$
h_1((\xi, h);\theta)= \mathbf{1}_{h=\eta_1}f(\xi|\eta_1,\theta) 
$.
For \(i\ge 2\), the transition density of \(Z_i\) given \(Z_{i-1}\) is
\[
h\big((\xi_i,\eta_i)|(\xi_{i-1},\eta_{i-1}),\theta\big)
=
\varpi_{\eta_{i-1},\eta_i}f(\xi_i|\eta_i,\theta).
\]
Consequently,
using $\sff_N$ from \eqref{post-bayes-inf},
\[
\mathsf h_N(\theta)
=
f(\xi_1|\eta_1,\theta)
\prod_{i=2}^N
\big[
\varpi_{\eta_{i-1},\eta_i}f(\xi_i|\eta_i,\theta)
\big]
=
\sff_N(\theta).
\]
Thus, because \(r=p\), the posterior density \(\mathsf r_N\) in
\Cref{lem:posterior-delta} is exactly the contextual posterior density
\(\mathsf p_N\) in \eqref{post-bayes-inf}. The factor
\(\prod_{i=2}^N\varpi_{\eta_{i-1},\eta_i}\) does not depend on
\(\theta\). Hence maximizing \(\log\mathsf h_N(\theta)\) is equivalent
to maximizing \(\lambda_N(\theta)\) from \eqref{eq:phi_N}, 
and the generic maximum likelihood
estimator coincides with the contextual maximizer \(\hat\theta_N\).
Consequently, the corresponding local posterior density
\(\mathsf r_N^*\) coincides with \(\mathsf p_N^*\). Under correct
specification, the law \(\mathfrak P^*\) of \(\{Z_t\}_{t\ge1}\) under
the true parameter \(\theta^*\) is the contextual data-generating law
\(\mathbb P^*\).

Using the score function $s$ from 
\eqref{score}, the transition density above also gives
\[
\nabla_\theta\log h(Z_2|Z_1,\theta)
=
\nabla_\theta\log f(\xi_2|\eta_2,\theta)
=
s(\xi_2|\eta_2,\theta),
\]
because the transition probability \(\varpi_{\eta_1,\eta_2}\) does not
depend on \(\theta\). Under the stationary law used in
\eqref{eq:fisher-matrix-general-setting}, the marginal distribution of
\(Z_2=(\xi_2,\eta_2)\) is given by \eqref{eq:Pstar}, that is, 
$
\mathrm dP^*(\xi,\eta)
=
\nu_\eta(\eta)f(\xi|\eta,\theta^*)\,\mathrm d\xi 
$.
Therefore, by \eqref{eq:fisher-matrix-general-setting} and
\eqref{eq:fisher-matrices},
\[
\begin{aligned}
\mathfrak I(\theta^*)
=
\mathbb E_{(\xi,\eta)\sim P^*}
\big[
s(\xi|\eta,\theta^*)s(\xi|\eta,\theta^*)^\top
\big]
=
\sum_{h\in\HH}\nu_\eta(h)
\mathbb E_{\xi\sim f(\cdot|h,\theta^*)}
\big[
s(\xi|h,\theta^*)s(\xi|h,\theta^*)^\top
\big]
=
I(\theta^*).
\end{aligned}
\]
Thus the hypotheses of \Cref{lem:posterior-delta} hold with
\(\mathsf r_N=\mathsf p_N\), \(\mathsf r_N^*=\mathsf p_N^*\),
\(\mathfrak P^*=\mathbb P^*\), and
\(\mathfrak I(\theta^*)=I(\theta^*)\). In particular,
\eqref{eq:BvM-contextual-local-posterior} is exactly
\eqref{eq:BvM-Markov-Chain}.

\Cref{lem:differentiability-cost} implies that $g(\theta) \coloneqq  V_{\theta}^{\pi^*}(x_1, \eta_1) $ is
bounded and 
differentiable at \(\theta^*\), with
$
\nabla g(\theta^*)
=
\mathbb{E}_{\mathbb{P}_{\theta^*}}
[H_{\theta^*}(x_1,\eta_1)]
$.
Applying \Cref{lem:posterior-delta}  to
\(g\) gives
\[
\begin{aligned}
N^{1/2}
\big(
\mathbb{E}_{\theta\sim\mathsf p_N}
[V_{\theta}^{\pi^*}(x_1,\eta_1)]
-
V_{\theta^*}^{\pi^*}(x_1,\eta_1)
\big)
\overset{\mathbb{P}^*}{\rightsquigarrow}
\mathcal{N}
\big(
0,
\nabla g(\theta^*)^\top
I(\theta^*)^{-1}
\nabla g(\theta^*)
\big).
\end{aligned}
\]
By correct specification and identifiability,
\(\mathbb{P}^*=\mathbb{P}_{\theta^*}\) and
\(V_{\theta^*}^{\pi^*}(x_1,\eta_1)=V^*(x_1,\eta_1)\). Hence 
\[
N^{1/2}
\big(
\mathbb{E}_{\theta\sim\mathsf p_N}
[V_{\theta}^{\pi^*}(x_1,\eta_1)]
-
V^*(x_1,\eta_1)
\big)
\overset{\mathbb{P}^*}{\rightsquigarrow}
\mathcal{N}
\big(
0,
\nabla g(\theta^*)^\top
I(\theta^*)^{-1}
\nabla g(\theta^*)
\big).
\]
Next, we decompose
\[
\begin{aligned}
N^{1/2}\big(V_N^*(x_1,\eta_1)-V^*(x_1,\eta_1)\big) 
& =
N^{1/2}
\big(
V_N^*(x_1,\eta_1)
-
\mathbb{E}_{\theta\sim\mathsf p_N}
[V_{\theta}^{\pi^*}(x_1,\eta_1)]
\big) \\
&\quad
+
N^{1/2}
\big(
\mathbb{E}_{\theta\sim\mathsf p_N}
[V_{\theta}^{\pi^*}(x_1,\eta_1)]
-
V^*(x_1,\eta_1)
\big).
\end{aligned}
\]
The first term converges to zero in \(\mathbb{P}^*\)-probability by
\eqref{eq:asymptotic-expansion}. The second term has the normal limit
identified above. Now, Slutsky's lemma 
and the gradient formula
$
\nabla g(\theta^*)
=
\mathbb{E}_{\mathbb{P}_{\theta^*}}
[H_{\theta^*}(x_1,\eta_1)]
$
give \eqref{eq:BvM-limit}.
\end{proof}

\subsection{Finite state-action SOC models}
\label{subsec:finite-state-action-verification}

The next result shows that the asymptotic equivalence
assumption \eqref{eq:asymptotic-expansion} 
in \Cref{thm:BvM-contextual-optimal-value} holds in the
finite state-action case.

\begin{proposition}
\label{prop:finite-state-action-asymptotic-expansion}
Suppose that \(\mathcal X\) and \(\U\) are finite, and suppose that
\Cref{assump:consistency,assume:infinite-horizon-1,assume:infinite-horizon-2,assump:contextual-weighted-bvm} hold. Then, for every fixed
\((x_1,\eta_1)\in\mathcal X\times\HH\),
the condition in  \eqref{eq:asymptotic-expansion} holds.
\end{proposition}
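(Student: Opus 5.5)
The plan is to exploit finiteness in two ways: first to show that the Bayesian optimal policy eventually coincides with $\pi^*$, and then to write both value functions in closed linear-algebra form, so that the gap in \eqref{eq:asymptotic-expansion} becomes a second-order quantity in $\theta-\theta^*$.

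\emph{Step 1: the Bayesian policy eventually equals $\pi^*$.} Since $\mathcal X\times\HH$ and $\U$ are finite and $\pi^*$ is unique by \Cref{assume:infinite-horizon-2}(iii), the true $Q$-function
\[
Q(x,\eta,u)\coloneqq\bbe_{\xi\sim q(\cdot|\eta)}\big[c(x,u,\xi)+\gamma\,\bbe_{\eta'|\eta}V^*(F(x,u,\xi),\eta')\big]
\]
has a positive action gap $\delta>0$. This means $Q(x,\eta,u)\ge Q(x,\eta,\pi^*(x,\eta))+\delta$ for all $u\ne\pi^*(x,\eta)$. Let $Q_N$ be the analogous function built from $V_N^*$ and $\bbe_{\mathsf p_N|\eta}$. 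It satisfies
\[
\sup|Q_N-Q|\le \gamma\|V_N^*-V^*\|_\infty+C\,\mathbb E_{\theta\sim\mathsf p_N}\int_\Xi|f(\xi|\eta,\theta)-f(\xi|\eta,\theta^*)|\,\mathrm d\xi .
\]
By \Cref{prop:consistency-value-functions} and \Cref{lem:predictive-L1}, the right-hand side tends to $0$, $\mathbb P^*$-a.s. Hence, $\mathbb P^*$-a.s., for all large $N$ the minimizer in \eqref{optsol} is unique and equals $\pi^*$. On this event $V_N^*$ is the policy value of $\pi^*$ under the posterior predictive model. Because the event holds eventually a.s., it suffices to prove \eqref{eq:asymptotic-expansion} with $V_N^*$ replaced by this policy value.

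\emph{Step 2: linear-algebra representation.} For the fixed policy $\pi^*$, define the kernel on $\mathcal X\times\HH$ and the one-stage cost vector
\[
P_\theta\big((x',\eta')|(x,\eta)\big)=\varpi_{\eta,\eta'}\int_\Xi \mathbf 1\{F(x,\pi^*(x,\eta),\xi)=x'\}f(\xi|\eta,\theta)\,\mathrm d\xi,\qquad
c_\theta(x,\eta)=\int_\Xi c(x,\pi^*(x,\eta),\xi)f(\xi|\eta,\theta)\,\mathrm d\xi .
\]
Then $V_\theta^{\pi^*}=(I-\gamma P_\theta)^{-1}c_\theta$. Both $P_\theta$ and $c_\theta$ are linear in $f(\cdot|\eta,\theta)$, so the predictive model uses $\bar P_N=\mathbb E_{\mathsf p_N}P_\theta$ and $\bar c_N=\mathbb E_{\mathsf p_N}c_\theta$, and the policy value is $\bar V_N=(I-\gamma\bar P_N)^{-1}\bar c_N$. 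The resolvent identity gives
\[
V_\theta^{\pi^*}-\bar V_N=(I-\gamma\bar P_N)^{-1}\big[(c_\theta-\bar c_N)+\gamma(P_\theta-\bar P_N)V_\theta^{\pi^*}\big].
\]
Take the posterior mean. The $c$-term vanishes, and since $\mathbb E_{\mathsf p_N}(P_\theta-\bar P_N)=0$ we may subtract the constant vector $V_{\theta^*}^{\pi^*}$ inside the second term. This yields
\[
\big\|\mathbb E_{\mathsf p_N}V_\theta^{\pi^*}-\bar V_N\big\|_\infty\le \tfrac{\gamma}{1-\gamma}\,\mathbb E_{\mathsf p_N}\big[\|P_\theta-\bar P_N\|_\infty\,\|V_\theta^{\pi^*}-V_{\theta^*}^{\pi^*}\|_\infty\big].
\]

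\emph{Step 3: second-order bound (main obstacle).} I need local Lipschitz bounds $\|P_\theta-P_{\theta^*}\|_\infty$, $\|c_\theta-c_{\theta^*}\|_\infty$, $\|V_\theta^{\pi^*}-V_{\theta^*}^{\pi^*}\|_\infty\le L\|\theta-\theta^*\|_2$ on $\overline B_{\epsilon/2}(\theta^*)$. These follow from
\[
\int_\Xi|f(\xi|\eta,\theta)-f(\xi|\eta,\theta^*)|\,\mathrm d\xi\le\|\theta-\theta^*\|_2\sup_{\vartheta}\int_\Xi\|s\|_2 f\,\mathrm d\xi\le\|\theta-\theta^*\|_2\sup_{\vartheta}\Big(\int_\Xi\|s\|_2^2f\,\mathrm d\xi\Big)^{1/2},
\]
where the suprema are over $\vartheta\in\overline B_{\epsilon/2}(\theta^*)$. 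The first inequality uses \Cref{assume:infinite-horizon-2}(i) together with the mean-value theorem and Fubini; the second uses Cauchy--Schwarz, and the final supremum is finite by the domination in \Cref{assume:infinite-horizon-2}(ii). The bound for $V$ then follows from the resolvent identity. Outside the ball, all quantities are bounded by constants, and there $1\le 4\|\theta-\theta^*\|_2^2/\epsilon^2$. Splitting $\|P_\theta-\bar P_N\|\le\|P_\theta-P_{\theta^*}\|+\|\bar P_N-P_{\theta^*}\|$ and using Cauchy--Schwarz on the posterior (with $\|\bar P_N-P_{\theta^*}\|\le \mathbb E_{\mathsf p_N}\|P_\theta-P_{\theta^*}\|$), I would obtain
\[
\big|\mathbb E_{\mathsf p_N}[V_\theta^{\pi^*}(x_1,\eta_1)]-\bar V_N(x_1,\eta_1)\big|\le C\,\mathbb E_{\theta\sim\mathsf p_N}\|\theta-\theta^*\|_2^2 .
\]
By \Cref{lem:posterior-second-moment}, which applies to $\mathsf p_N$ via the identification in the proof of \Cref{thm:BvM-contextual-optimal-value}, the right-hand side is $O_{\mathbb P^*}(N^{-1})$. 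Multiplying by $N^{1/2}$ gives convergence to $0$ in $\mathbb P^*$-probability. Combined with Step 1, this yields \eqref{eq:asymptotic-expansion}.
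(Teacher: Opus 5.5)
Your proof is correct. Steps 2 and 3 are essentially the paper's \Cref{lem:finite-primitive-concentration,lem:finite-fixed-policy-posterior-average}: the same resolvent identity, the same local Lipschitz bound through the score, the same Cauchy--Schwarz step, and the same appeal to \Cref{lem:posterior-second-moment}. The paper centers at $\mathbb{E}_{\vartheta\sim\mathsf p_N}[V_\vartheta^{\pi^*}]$ rather than at $V_{\theta^*}^{\pi^*}$. That difference is immaterial, since $\mathbb{E}_{\mathsf p_N}(P_\theta-\bar P_N)=0$ annihilates any constant vector.

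The genuine difference is Step 1, the identification $V_N^*=\overline V_N^{\pi^*}$.

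\emph{Your route.} You work directly with the Bayesian $Q$-function built from $V_N^*$. You drive it to the true $Q$ using the almost-sure consistency results of Section 3 (\Cref{prop:consistency-value-functions} and \Cref{lem:predictive-L1}). The action gap then gives $\pi_N^*=\pi^*$ and $V_N^*=\overline V_N^{\pi^*}$ for all large $N$, $\mathbb P^*$-almost surely.

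\emph{The paper's route.} It runs a verification argument instead. It builds $\bar Q_N$ from the candidate fixed-policy value $\overline V_N^{\pi^*}$ and bounds $\max_{z,u}|\bar Q_N(z,u)-Q^*(z,u)|\le C_Q d_N$. Here $d_N=O_{\mathbb P^*}(N^{-1/2})$ is the same primitive quantity controlled in \Cref{lem:finite-primitive-concentration}. It then checks that $\pi^*$ is greedy for $\bar Q_N$ on $E_N=\{d_N<\Delta/(3C_Q)\}$, so $\overline V_N^{\pi^*}$ solves the Bayesian optimality equation there. By uniqueness of the fixed point, $V_N^*=\overline V_N^{\pi^*}$ on an event whose probability tends to one.

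\emph{What each buys.} Your version gives a stronger conclusion (eventual almost-sure equality, and hence policy consistency $\pi_N^*=\pi^*$) at no extra cost within the paper. However, it leans on the almost-sure posterior-consistency machinery behind \Cref{prop:consistency-value-functions}, namely the uniform LLN and the exponential decay in \Cref{lem:ULLN,lem:exp-decay}. The paper's version stays entirely within the $O_{\mathbb P^*}$ framework driven by the Bernstein--von Mises input through \Cref{lem:posterior-second-moment}. It needs only $d_N\to 0$ in probability, which is exactly enough for a statement in probability such as \eqref{eq:asymptotic-expansion}.

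One small point: if $|\U|=1$, your gap $\delta$ is undefined and the identification is trivial. The paper handles this case explicitly, and you should too.
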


We defer the proof of \Cref{prop:finite-state-action-asymptotic-expansion} to 
\Cref{app:finite-state-action-verification}.

\paragraph{Acknowledgments}

We thank the Associate Editor and the two anonymous referees for
their careful reading and constructive comments, which helped improve the
manuscript.
We thank Xin Chen for valuable discussions and sharing relevant literature.
We used ChatGPT~5.2 and 5.5, and Microsoft Copilot (GPT 5.2) to polish parts of the manuscript.
Moreover, we used Gemini~3 Pro 
and ChatGPT~5.5
to assist with generating TikZ figures.

\appendix

\section{Proof of \Cref{lem:posterior-delta}}
\label{app:posterior-delta}

We prepare our proof of \Cref{lem:posterior-delta}. 
By the definition of the posterior local density \(\mathsf{r}_N^*\), for any posterior-integrable
function \(h\),
\begin{equation}
\label{eq:posterior-mean-h}
\mathbb{E}_{\theta\sim\mathsf{r}_N}[h(\theta)]
=
\int_{N^{1/2}(\Theta-\hat{\theta}_N)}
h\big(\hat\theta_N+N^{-1/2}\tau\big)
\mathsf{r}_N^*(\tau)\,\mathrm{d}\tau .
\end{equation}

\begin{proof}{Proof of \Cref{lem:posterior-delta}}
Set \(B_N\coloneqq N^{1/2}(\hat\theta_N-\theta^*)\). Using \eqref{eq:posterior-mean-h}, 
we have
\[
\mathbb{E}_{\theta\sim\mathsf{r}_N}[g(\theta)]
=
\int_{N^{1/2}(\Theta-\hat{\theta}_N)}
g\big(\theta^*+N^{-1/2}(B_N+\tau)\big)
\mathsf{r}_N^*(\tau)\,\mathrm{d}\tau .
\]
Using \eqref{eq:BvM-Markov-Chain}, we obtain
\begin{equation}
\label{eq:bvm-local-posterior-moments}
\int_{\mathbb{R}^s}
\tau\mathsf{r}_N^*(\tau)\,\mathrm{d}\tau
=o_{\mathfrak{P}^*}(1),
\quad \text{and} \quad 
\int_{\mathbb{R}^s}
\|\tau\|_2^2\mathsf{r}_N^*(\tau)\,\mathrm{d}\tau
=O_{\mathfrak{P}^*}(1).
\end{equation}
The first relation follows because the limiting normal distribution is
	centered, and the second follows from the weight \(1 + \|\tau\|_2^2\)
in \eqref{eq:BvM-Markov-Chain}.
Since \(g\) is differentiable at \(\theta^*\), there exists a function
\(\rho\), with \(\rho(h)\to0\) as \(h\to0\), such that
\[
g(\theta^*+h)
=
g(\theta^*)+\nabla g(\theta^*)^\top h+\rho(h)\|h\|_2 .
\]
Therefore,
\begin{equation}
\label{eq:g-taylor-expansion}
N^{1/2}
\big(
\mathbb{E}_{\theta\sim\mathsf{r}_N}[g(\theta)]-g(\theta^*)
\big)
 =
\nabla g(\theta^*)^\top B_N
+
\nabla g(\theta^*)^\top
\bigg[\int_{N^{1/2}(\Theta-\hat{\theta}_N)}
\tau\mathsf{r}_N^*(\tau)\,\mathrm{d}\tau
\bigg]
+
R_N ,
\end{equation}
where \(R_N\) is the remainder term defined by
\[
\begin{aligned}
R_N\coloneqq 
\int_{N^{1/2}(\Theta-\hat{\theta}_N)}
\Big[
N^{1/2}
\big(
g\big(\theta^*+N^{-1/2}(B_N+\tau)\big)-g(\theta^*)
\big)
-
\nabla g(\theta^*)^\top(B_N+\tau)
\Big]
\mathsf{r}_N^*(\tau)\,\mathrm{d}\tau .
\end{aligned}
\]

We show that \(R_N=o_{\mathfrak{P}^*}(1)\). Since
\(B_N=O_{\mathfrak{P}^*}(1)\), and 
using \eqref{eq:bvm-local-posterior-moments},  for every sufficiently large \(M\), the event
\[
E_N(M)\coloneqq 
\Big\{
\|B_N\|_2\le M,\quad
\int_{N^{1/2}(\Theta-\hat{\theta}_N)}
\|\tau\|_2^2\mathsf{r}_N^*(\tau)\,\mathrm{d}\tau\le M
\Big\}
\]
has probability arbitrarily close to one, uniformly for all large \(N\).
Fix \(\varepsilon>0\), and choose \(\delta>0\) such that
$\overline{B}_\delta(\theta^*) \subset \Theta$, and 
\(|\rho(h)|\le\varepsilon\) whenever \(\|h\|_2\le\delta\). On
\(E_N(M)\), we split the integral region in the remainder term  into
\[
A_N\coloneqq \{\tau \in N^{1/2}(\Theta-\hat{\theta}_N): N^{-1/2}\|B_N+\tau\|_2\leq \delta\}
\]
and its complement $A_N^c$.
On \(A_N\), we have
\[
\begin{aligned}
\int_{A_N}
|\rho(N^{-1/2}(B_N+\tau))|\|B_N+\tau\|_2
\mathsf{r}_N^*(\tau)\,\mathrm{d}\tau
\le
\varepsilon
\int_{N^{1/2}(\Theta-\hat{\theta}_N)}
\|B_N+\tau\|_2\mathsf{r}_N^*(\tau)\,\mathrm{d}\tau
\le
\varepsilon(M+M^{1/2}).
\end{aligned}
\]

Now, we bound the remainder term integral on $A_N^c$.
On \(A_N^c\), for all sufficiently large \(N\), the event \(E_N(M)\)
implies \(\|\tau\|_2> \delta N^{1/2}/2\). Hence,
on \(E_N(M)\),
\[
\begin{aligned}
\int_{A_N^c}\mathsf{r}_N^*(\tau)\,\mathrm{d}\tau
\le
\int_{\{\|\tau\|_2>\delta N^{1/2}/2\}}
\mathsf{r}_N^*(\tau)\,\mathrm{d}\tau 
\le
\frac{1}{(\delta N^{1/2}/2)^2}
\int_{N^{1/2}(\Theta-\hat{\theta}_N)}
\|\tau\|_2^2
\mathsf{r}_N^*(\tau)\,\mathrm{d}\tau 
\le
\frac{4M}{\delta^2N}.
\end{aligned}
\]
Moreover, on \(E_N(M)\),
\[
\begin{aligned}
\int_{\{\|\tau\|_2>\delta N^{1/2}/2\}}
\!\!
\|\tau\|_2\mathsf{r}_N^*(\tau)\,\mathrm{d}\tau 
\le
\frac{1}{\delta N^{1/2}/2}
\int_{\{\|\tau\|_2>\delta N^{1/2}/2\}}
\!\!
\|\tau\|_2^2\mathsf{r}_N^*(\tau)\,\mathrm{d}\tau 
\le
\frac{2}{\delta N^{1/2}}
\int_{N^{1/2}(\Theta-\hat{\theta}_N)}
\!\!
\|\tau\|_2^2\mathsf{r}_N^*(\tau)\,\mathrm{d}\tau. 
\end{aligned}
\]
and, hence, 
\[
\begin{aligned}
\int_{A_N^c}
\|B_N+\tau\|_2\mathsf{r}_N^*(\tau)\,\mathrm{d}\tau
\le
M\int_{A_N^c}\mathsf{r}_N^*(\tau)\,\mathrm{d}\tau
+
\int_{\|\tau\|_2>\delta N^{1/2}/2}
\|\tau\|_2\mathsf{r}_N^*(\tau)\,\mathrm{d}\tau
\le
\frac{4M^2}{\delta^2N}
+
\frac{2M}{\delta N^{1/2}}
=
o(1).
\end{aligned}
\]
Since \(g\) is bounded, there exists \(C>0\) such that
\(|g(\theta)|\le C\) for all \(\theta\in\Theta\). We have, 
on \(E_N(M)\),
\[
\begin{aligned}
&\int_{A_N^c}
\Big|
N^{1/2}
\big[
g\big(\theta^*+N^{-1/2}(B_N+\tau)\big)-g(\theta^*)
\big]
-
\nabla g(\theta^*)^\top(B_N+\tau)
\Big|
\mathsf{r}_N^*(\tau)\,\mathrm{d}\tau
\\
&\quad \le
2C N^{1/2}
\int_{A_N^c}\mathsf{r}_N^*(\tau)\,\mathrm{d}\tau
+
\|\nabla g(\theta^*)\|_2
\int_{A_N^c}
\|B_N+\tau\|_2\mathsf{r}_N^*(\tau)\,\mathrm{d}\tau
 =
o(1).
\end{aligned}
\]

Combining the estimates over \(A_N\) and \(A_N^c\), we have, on
\(E_N(M)\),
$
|R_N|
\leq 
\varepsilon(M+M^{1/2})+o(1)
$.
Thus, for each fixed \(M\),
\[
\limsup_{N\to\infty}|R_N|
\le
\varepsilon(M+M^{1/2})
\quad
\text{on} \quad E_N(M).
\]
Since \(\varepsilon>0\) is arbitrary, \(R_N=o(1)\) on \(E_N(M)\).
Finally, because \(M\) can be chosen so that \(E_N(M)\) has probability
arbitrarily close to one uniformly for all large \(N\), we obtain
\(R_N=o_{\mathfrak{P}^*}(1)\).

Using \eqref{eq:bvm-local-posterior-moments}, 
\(R_N=o_{\mathfrak{P}^*}(1)\), 
and  \(N^{1/2}(\hat\theta_N-\theta^\ast)\overset{\mathfrak{P}^*}{\rightsquigarrow}
\mathcal N(0,\mathfrak{I}(\theta^*)^{-1})\), 
we deduce the statistical limit in \eqref{eq:posterior-mean} from the expansion
\eqref{eq:g-taylor-expansion}.
\end{proof}

\section{Proof of \Cref{lem:posterior-second-moment}}
\label{app:posterior-second-moment}

\begin{proof}{Proof of \Cref{lem:posterior-second-moment}}
Taking \(h(\theta)=\|\theta-\theta^*\|_2^2\) in \eqref{eq:posterior-mean-h}, 
and defining 
$
B_N\coloneqq N^{1/2}(\hat\theta_N-\theta^*)
$, 
we have
\[
\begin{aligned}
\mathbb{E}_{\theta\sim\mathsf{r}_N}
[\|\theta-\theta^*\|_2^2]
=
\int_{N^{1/2}(\Theta-\hat{\theta}_N)}
\big\|
N^{-1/2}(B_N+\tau)
\big\|_2^2
\mathsf{r}_N^*(\tau)\,\mathrm{d}\tau
=
N^{-1}
\int_{N^{1/2}(\Theta-\hat{\theta}_N)}
\|B_N+\tau\|_2^2
\mathsf{r}_N^*(\tau)\,\mathrm{d}\tau .
\end{aligned}
\]
Hence
\[
\begin{aligned}
\mathbb{E}_{\theta\sim\mathsf{r}_N}
[\|\theta-\theta^*\|_2^2]
\le
2N^{-1}\|B_N\|_2^2
+
2N^{-1}
\int_{N^{1/2}(\Theta-\hat{\theta}_N)}
\|\tau\|_2^2
\mathsf{r}_N^*(\tau)\,\mathrm{d}\tau.
\end{aligned}
\]
The central limit theorem implies that $B_N$ is stochastically bounded, 
and \eqref{eq:BvM-Markov-Chain} 
ensures
$
\int_{N^{1/2}(\Theta-\hat{\theta}_N)}
\|\tau\|_2^2
\mathsf{r}_N^*(\tau)\,\mathrm{d}\tau
$
is stochastically bounded. We obtain the assertion.
\end{proof}

\section{Proof of \Cref{prop:finite-state-action-asymptotic-expansion}}
\label{app:finite-state-action-verification}

We now prepare the proof of
\Cref{prop:finite-state-action-asymptotic-expansion}. Set
\(\mathcal Z\coloneqq \mathcal X\times\HH\). For \(z=(x,\eta)\),
\(z'=(x',h)\), and \(u\in\U\), define
\[
\ell_\theta(z,u)
\coloneqq 
\mathbb{E}_{\xi\sim f(\cdot|\eta,\theta)}
[c(x,u,\xi)]
\quad \text{and} \quad 
\mathsf{G}_{\theta}(z'|z,u)
\coloneqq 
\varpi_{\eta,h}
\mathbb{E}_{\xi\sim f(\cdot|\eta,\theta)}
[
\mathbf 1_{\{F(x,u,\xi)=x'\}}
].
\]
Let
\(\ell_N\coloneqq \mathbb{E}_{\theta\sim\mathsf p_N}[\ell_\theta]\) and
\(\mathsf{G}_N\coloneqq \mathbb{E}_{\theta\sim\mathsf p_N}[\mathsf{G}_{\theta}]\). We use the notation
$
\|\mathsf{G}\|_\infty
\coloneqq 
\max_{z\in \mathcal{Z},u \in \mathcal{U}}\sum_{z'\in\mathcal Z}|\mathsf{G}(z'|z,u)|
$.

\begin{lemma}
\label{lem:finite-primitive-concentration}
Under the hypotheses of
\Cref{prop:finite-state-action-asymptotic-expansion},
$
\mathbb{E}_{\theta\sim\mathsf p_N}
\big[
\|\ell_\theta-\ell_{\theta^*}\|_\infty^2
+
\|\mathsf{G}_{\theta}-\mathsf{G}_{\theta^*}\|_\infty^2
\big]
=
O_{\mathbb{P}^*}(N^{-1})
$.
Consequently,
\(\|\ell_N-\ell_{\theta^*}\|_\infty
+\|\mathsf{G}_N-\mathsf{G}_{\theta^*}\|_\infty
=O_{\mathbb{P}^*}(N^{-1/2})\).
\end{lemma}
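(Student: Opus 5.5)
The plan is to reduce both terms to a single local Lipschitz bound and then use \Cref{lem:posterior-second-moment} for the posterior mass near $\theta^*$. The step needing the most care is the Lipschitz bound itself.

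\emph{Local Lipschitz bound.} Fix $\eta\in\HH$ and $\theta\in\overline{B}_{\epsilon/2}(\theta^*)$. Since $c$ is bounded by $C$ and indicators are bounded by $1$, both $|\ell_\theta(z,u)-\ell_{\theta^*}(z,u)|$ and $\sum_{z'}|\mathsf G_\theta(z'|z,u)-\mathsf G_{\theta^*}(z'|z,u)|$ are bounded by a constant multiple of $\int_\Xi|f(\xi|\eta,\theta)-f(\xi|\eta,\theta^*)|\,\mathrm d\xi$. For $\mathsf G$ we use $\sum_{x'}\mathbf 1_{\{F(x,u,\xi)=x'\}}=1$ and $\sum_h\varpi_{\eta,h}=1$. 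By the mean value theorem along the segment, which is valid by \Cref{assume:infinite-horizon-2}(i), and Fubini, this $L^1$ distance is at most
\[
\|\theta-\theta^*\|_2\sup_{\vartheta\in\overline{B}_{\epsilon/2}(\theta^*)}\int_\Xi\|s(\xi|\eta,\vartheta)\|_2f(\xi|\eta,\vartheta)\,\mathrm d\xi .
\]
This uses $\nabla_\theta f=fs$.

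To bound the supremum, apply Cauchy--Schwarz with respect to the probability density $f(\cdot|\eta,\vartheta)$. The integral is then at most $\big(\int\|s\|_2^2f\,\mathrm d\xi\big)^{1/2}$. This is bounded uniformly over the ball by the dominating integrable function in \Cref{assume:infinite-horizon-2}(ii). Since $\HH$, $\mathcal X$ and $\U$ are finite, taking maxima over $(z,u)$ gives
\[
\|\ell_\theta-\ell_{\theta^*}\|_\infty^2+\|\mathsf G_\theta-\mathsf G_{\theta^*}\|_\infty^2\le L\|\theta-\theta^*\|_2^2
\]
on $\overline{B}_{\epsilon/2}(\theta^*)$ for some constant $L$. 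One could instead try to differentiate under the integral directly, but this route avoids needing that.

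\emph{Global bound.} Outside the ball, both quantities are bounded by a constant $K$, since $\|\ell_\theta\|_\infty\le C$ and $\|\mathsf G_\theta\|_\infty\le 1$. On that region $\|\theta-\theta^*\|_2^2\ge\epsilon^2/4$, so $K\le 4K\epsilon^{-2}\|\theta-\theta^*\|_2^2$. Therefore a global bound $\le L'\|\theta-\theta^*\|_2^2$ holds on all of $\Theta$.

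\emph{First claim.} Integrate the global bound against $\mathsf p_N$. The identification in the proof of \Cref{thm:BvM-contextual-optimal-value} shows that \Cref{assump:weighted-bvm} holds with $\mathsf r_N=\mathsf p_N$. Hence \Cref{lem:posterior-second-moment} gives $\mathbb{E}_{\theta\sim\mathsf p_N}\|\theta-\theta^*\|_2^2=O_{\mathbb P^*}(N^{-1})$, which is the first claim.

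\emph{Consequence.} The map $\theta\mapsto\ell_\theta-\ell_{\theta^*}$ is linear in the posterior mixture, and $\|\cdot\|_\infty$ is convex. Jensen's inequality gives
\[
\|\ell_N-\ell_{\theta^*}\|_\infty\le\mathbb{E}_{\theta\sim\mathsf p_N}\|\ell_\theta-\ell_{\theta^*}\|_\infty\le\big(\mathbb{E}_{\theta\sim\mathsf p_N}\|\ell_\theta-\ell_{\theta^*}\|_\infty^2\big)^{1/2}=O_{\mathbb P^*}(N^{-1/2}),
\]
and the same argument applies to $\mathsf G_N$.
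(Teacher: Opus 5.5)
Your proof is correct and follows the paper's route. You prove a local Lipschitz bound $\|\ell_\theta-\ell_{\theta^*}\|_\infty+\|\mathsf{G}_\theta-\mathsf{G}_{\theta^*}\|_\infty\le L\|\theta-\theta^*\|_2$ on $\overline{B}_{\epsilon/2}(\theta^*)$ from the second moment of the score, absorb the constant bound outside the ball into $\|\theta-\theta^*\|_2^2$ (the paper does this with Markov's inequality after integrating), and finish with \Cref{lem:posterior-second-moment} and Jensen.

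The only difference is how you get the Lipschitz bound. You bound the $L^1$ distance of the densities using the integral form of the mean value theorem along the segment, Tonelli, and Cauchy--Schwarz. The paper instead differentiates each $m_{\upsilon,\eta}(\theta)=\mathbb{E}_{\xi\sim f(\cdot|\eta,\theta)}[\upsilon(\xi)]$ under the integral sign via Theorem~9.56 in \cite{SDR}. Your version needs only that $\int_\Xi\|s\|_2^2 f\,\mathrm{d}\xi$ be bounded on the ball, and it avoids the interchange of differentiation and integration altogether.
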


\begin{proof}{Proof}
Let \(\Upsilon\) be the finite collection of bounded functions on \(\Xi\)
consisting of \(c(x,u,\cdot)\) and
\(\mathbf 1_{\{F(x,u,\cdot)=x'\}}\), with
\(z,z'\in\mathcal Z\) and \(u\in\U\). For
\(\upsilon\in\Upsilon\) and \(\eta\in\HH\), define
\(m_{\upsilon,\eta}(\theta)\coloneqq 
\mathbb{E}_{\xi\sim f(\cdot|\eta,\theta)}
[
\upsilon(\xi)
]
\).
By \Cref{assume:infinite-horizon-2}(i)--(ii), 
Theorem~9.56 in \cite{SDR}
gives, for
\(\theta\in\overline B_{\epsilon/2}(\theta^*)\),
\[
\nabla_\theta m_{\upsilon,\eta}(\theta)
=
\mathbb{E}_{\xi\sim f(\cdot|\eta,\theta)}
[\upsilon(\xi)s(\xi|\eta,\theta)] ,
\]
where the score function $s$ is defined in \eqref{score};
see also the likelihood-ratio
differentiation argument used in the proof of
\Cref{lem:differentiability-cost}.
Hence
\[
\|\nabla_\theta m_{\upsilon,\eta}(\theta)\|_2
\le
\|\upsilon\|_\infty
\mathbb{E}_{\xi\sim f(\cdot|\eta,\theta)}
[\|s(\xi|\eta,\theta)\|_2^2]^{1/2}.
\]
Since \(\HH\) and \(\Upsilon\) are finite, these derivatives are uniformly
bounded on \(\overline B_{\epsilon/2}(\theta^*)\). Hence, for some
\(L<\infty\), \(\|\ell_\theta-\ell_{\theta^*}\|_\infty
+\|\mathsf{G}_{\theta}-\mathsf{G}_{\theta^*}\|_\infty
\le L\|\theta-\theta^*\|_2\) whenever
\(\theta\in\overline B_{\epsilon/2}(\theta^*)\).
Since the costs and transition probabilities are bounded, there is
\(C<\infty\) such that, for all \(\theta\in\Theta\),
\[
\begin{aligned}
\|\ell_\theta-\ell_{\theta^*}\|_\infty^2
+
\|\mathsf{G}_{\theta}-\mathsf{G}_{\theta^*}\|_\infty^2  
\le
C\|\theta-\theta^*\|_2^2
+
C\mathbf 1_{\{\|\theta-\theta^*\|_2>\epsilon/2\}} .
\end{aligned}
\]
Taking posterior expectations and using Markov's inequality yields
\[
\mathbb{E}_{\theta\sim\mathsf p_N}
\big[
\|\ell_\theta-\ell_{\theta^*}\|_\infty^2
+
\|\mathsf{G}_{\theta}-\mathsf{G}_{\theta^*}\|_\infty^2
\big]
\le
C'
\mathbb{E}_{\theta\sim\mathsf p_N}
[\|\theta-\theta^*\|_2^2].
\]
By \Cref{lem:posterior-second-moment}, applied to
\(Z_t=(\xi_t,\eta_t)\), the right-hand side is
\(O_{\mathbb{P}^*}(N^{-1})\). The second assertion follows from
Jensen's inequality.
\end{proof}

For the optimal policy \(\pi^*\), we define
\(r_\theta(z)\coloneqq \ell_\theta(z,\pi^*(z))\) and
\(\mathsf{M}_\theta(z'|z)\coloneqq  \mathsf{G}_{\theta}(z'|z,\pi^*(z))\). Let
\(r_N\coloneqq  \mathbb{E}_{\theta\sim\mathsf p_N}[r_\theta]\) and
\(\mathsf{M}_N\coloneqq  \mathbb{E}_{\theta\sim\mathsf p_N}[\mathsf{M}_\theta]\). Then, 
we have
\(V_\theta^{\pi^*}=r_\theta+\gamma \mathsf{M}_\theta V_\theta^{\pi^*}\),
where we recall $V_\theta^{\pi^*}$ from \Cref{subsect:asymptotics}.
Now, let $\overline{V}_N^{\pi^*}$ be the unique solution to
\(\overline{V}_N^{\pi^*}=r_N+\gamma \mathsf{M}_N\overline{V}_N^{\pi^*}\).

\begin{lemma}
\label{lem:finite-fixed-policy-posterior-average}
Under the hypotheses of
\Cref{prop:finite-state-action-asymptotic-expansion}, 
$
\big\|
\overline{V}_N^{\pi^*}
-
\mathbb{E}_{\theta\sim\mathsf p_N}
[V_\theta^{\pi^*}]
\big\|_\infty
=
O_{\mathbb{P}^*}(N^{-1})
$.
\end{lemma}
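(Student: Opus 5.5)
We will write everything in matrix--vector form on the finite set \(\mathcal Z=\mathcal X\times\HH\). For each \(\theta\) we have \(V_\theta^{\pi^*}=(I-\gamma\mathsf M_\theta)^{-1}r_\theta\), and similarly \(\overline V_N^{\pi^*}=(I-\gamma\mathsf M_N)^{-1}r_N\). Both inverses exist with operator norm at most \((1-\gamma)^{-1}\), because \(\mathsf M_\theta\) and \(\mathsf M_N\) are stochastic matrices (\(\mathsf M_N\) is a posterior mixture of stochastic matrices). The key identity is the second-order resolvent expansion around the posterior mean. Set \(\Delta_\theta\coloneqq \mathsf M_\theta-\mathsf M_N\), \(\delta_\theta\coloneqq r_\theta-r_N\), and \(R_N\coloneqq (I-\gamma\mathsf M_N)^{-1}\). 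From \(V_\theta^{\pi^*}=r_\theta+\gamma\mathsf M_\theta V_\theta^{\pi^*}\) we get
\[
(I-\gamma\mathsf M_N)V_\theta^{\pi^*}=r_\theta+\gamma\Delta_\theta V_\theta^{\pi^*}.
\]
Hence
\[
V_\theta^{\pi^*}-\overline V_N^{\pi^*}=R_N\delta_\theta+\gamma R_N\Delta_\theta V_\theta^{\pi^*}.
\]

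We then take the posterior expectation \(\mathbb E_{\theta\sim\mathsf p_N}\). Since \(R_N\) is data-dependent but \(\theta\)-free, and \(\mathbb E_{\theta\sim\mathsf p_N}[\delta_\theta]=0\) and \(\mathbb E_{\theta\sim\mathsf p_N}[\Delta_\theta]=0\) by definition of \(r_N\) and \(\mathsf M_N\), the first-order terms vanish. We write \(V_\theta^{\pi^*}=\overline V_N^{\pi^*}+(V_\theta^{\pi^*}-\overline V_N^{\pi^*})\) in the second term. The part with \(\overline V_N^{\pi^*}\) is \(\theta\)-free and integrates against \(\Delta_\theta\) to zero. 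We are left with
\[
\mathbb E_{\theta\sim\mathsf p_N}[V_\theta^{\pi^*}]-\overline V_N^{\pi^*}
=\gamma R_N\,\mathbb E_{\theta\sim\mathsf p_N}\big[\Delta_\theta\,(V_\theta^{\pi^*}-\overline V_N^{\pi^*})\big].
\]
Note that a finite-dimensional posterior mean of a function of \(\theta\) commutes with these matrix operations.

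Next we bound the product by Cauchy--Schwarz in the posterior. The first ingredient is \(\|\Delta_\theta\|_\infty\le\|\mathsf M_\theta-\mathsf M_{\theta^*}\|_\infty+\|\mathsf M_N-\mathsf M_{\theta^*}\|_\infty\), and \(\|\mathsf M_\theta-\mathsf M_{\theta^*}\|_\infty\le\|\mathsf G_\theta-\mathsf G_{\theta^*}\|_\infty\), since selecting the action \(\pi^*(z)\) only restricts the maximum. By \Cref{lem:finite-primitive-concentration} (and Jensen for the mean term), this gives \(\mathbb E_{\theta\sim\mathsf p_N}\|\Delta_\theta\|_\infty^2=O_{\mathbb P^*}(N^{-1})\). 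The second ingredient is the identity above, which yields
\[
\|V_\theta^{\pi^*}-\overline V_N^{\pi^*}\|_\infty\le(1-\gamma)^{-1}\big(\|\delta_\theta\|_\infty+\gamma\|\Delta_\theta\|_\infty\|V_\theta^{\pi^*}\|_\infty\big).
\]
Here \(\|V_\theta^{\pi^*}\|_\infty\le(1-\gamma)^{-1}\sup|c|\), and \(\|\delta_\theta\|_\infty\) is controlled by \(\|\ell_\theta-\ell_{\theta^*}\|_\infty+\|\ell_N-\ell_{\theta^*}\|_\infty\). Hence \(\mathbb E_{\theta\sim\mathsf p_N}\|V_\theta^{\pi^*}-\overline V_N^{\pi^*}\|_\infty^2=O_{\mathbb P^*}(N^{-1})\). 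Combining via Cauchy--Schwarz,
\[
\big\|\mathbb E_{\theta\sim\mathsf p_N}[V_\theta^{\pi^*}]-\overline V_N^{\pi^*}\big\|_\infty\le\gamma(1-\gamma)^{-1}\big(\mathbb E\|\Delta_\theta\|_\infty^2\big)^{1/2}\big(\mathbb E\|V_\theta^{\pi^*}-\overline V_N^{\pi^*}\|_\infty^2\big)^{1/2}=O_{\mathbb P^*}(N^{-1}).
\]

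The main point requiring care is the cancellation step. One must center at the posterior means \(r_N,\mathsf M_N\) rather than at \(\theta^*\), so that the linear term vanishes exactly rather than merely being \(O(N^{-1/2})\). Everything else reduces to the second-moment bound of \Cref{lem:finite-primitive-concentration}, which ultimately rests on \Cref{lem:posterior-second-moment}. Minor checks are that \(\mathsf M_N\) is stochastic, so \(R_N\) is uniformly bounded, and that the \(O_{\mathbb P^*}\) statements can be multiplied, which holds since products of stochastically bounded sequences are stochastically bounded.
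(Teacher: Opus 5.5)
Your proof is correct and is essentially the paper's argument: both derive the exact identity $\mathbb{E}_{\theta\sim\mathsf p_N}[V_\theta^{\pi^*}]-\overline{V}_N^{\pi^*}=\gamma(I-\gamma\mathsf M_N)^{-1}\mathbb{E}_{\theta\sim\mathsf p_N}\big[(\mathsf M_\theta-\mathsf M_N)(V_\theta^{\pi^*}-\overline{V}_N^{\pi^*})\big]$ by centering at the posterior means. The paper writes this with $\mathbb{E}_{\vartheta\sim\mathsf p_N}[V_\vartheta^{\pi^*}]$ in place of $\overline{V}_N^{\pi^*}$ inside the bracket, which is the same because $\mathbb{E}_{\theta\sim\mathsf p_N}[\mathsf M_\theta-\mathsf M_N]=0$. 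Both then conclude by Cauchy--Schwarz in the posterior together with \Cref{lem:finite-primitive-concentration}; the only minor difference is that you bound the posterior second moment of the value factor via the resolvent around $\mathsf M_N$, whereas the paper perturbs around $\theta^*$.
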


\begin{proof}{Proof}
Taking posterior expectation in \(V_\theta^{\pi^*}=r_\theta+\gamma \mathsf{M}_\theta V_\theta^{\pi^*}\) 
and subtracting
gives, with
\(D_N\coloneqq  \overline{V}_N^{\pi^*}
-\mathbb{E}_{\theta\sim\mathsf p_N}[V_\theta^{\pi^*}]\),
\[
D_N
=
-\gamma
(I-\gamma \mathsf{M}_N)^{-1}
\mathbb{E}_{\theta\sim\mathsf p_N}
\Big[
(\mathsf{M}_\theta-\mathsf{M}_N)
\big(
V_\theta^{\pi^*}
-
\mathbb{E}_{\vartheta\sim\mathsf p_N}
[V_\vartheta^{\pi^*}]
\big)
\Big].
\]
Since \(\mathsf{M}_N\) is a transition matrix,
\(\|(I-\gamma \mathsf{M}_N)^{-1}\|_\infty\le (1-\gamma)^{-1}\). Thus, by the
Cauchy--Schwarz inequality, \(\|D_N\|_\infty
\le \gamma(1-\gamma)^{-1}A_N^{1/2}B_N^{1/2}\), where
\[
A_N\coloneqq  
\mathbb{E}_{\theta\sim\mathsf p_N}
[\|\mathsf{M}_\theta-\mathsf{M}_N\|_\infty^2]
\quad 
\text{and}
\quad 
B_N\coloneqq  
\mathbb{E}_{\theta\sim\mathsf p_N}
\left[
\left\|
V_\theta^{\pi^*}
-
\mathbb{E}_{\vartheta\sim\mathsf p_N}
[V_\vartheta^{\pi^*}]
\right\|_\infty^2
\right].
\]

By \Cref{lem:finite-primitive-concentration},
\(A_N\le
4\mathbb{E}_{\theta\sim\mathsf p_N}
[\|\mathsf{M}_\theta-\mathsf{M}_{\theta^*}\|_\infty^2]
=O_{\mathbb{P}^*}(N^{-1})\). Moreover,
\[
V_\theta^{\pi^*}-V_{\theta^*}^{\pi^*}
=
(I-\gamma \mathsf{M}_\theta)^{-1}
\big[
r_\theta-r_{\theta^*}
+
\gamma(\mathsf{M}_\theta-\mathsf{M}_{\theta^*})V_{\theta^*}^{\pi^*}
\big].
\]
Since \(\|(I-\gamma \mathsf{M}_\theta)^{-1}\|_\infty\le(1-\gamma)^{-1}\),
there is \(C<\infty\) such that
\[
\|V_\theta^{\pi^*}-V_{\theta^*}^{\pi^*}\|_\infty
\le
C
\big(
\|r_\theta-r_{\theta^*}\|_\infty
+
\|\mathsf{M}_\theta-\mathsf{M}_{\theta^*}\|_\infty
\big).
\]
Using \Cref{lem:finite-primitive-concentration} again gives
\[
B_N
\le
4\mathbb{E}_{\theta\sim\mathsf p_N}
[\|V_\theta^{\pi^*}-V_{\theta^*}^{\pi^*}\|_\infty^2]
=
O_{\mathbb{P}^*}(N^{-1}).
\]
Therefore, \(\|D_N\|_\infty=O_{\mathbb{P}^*}(N^{-1})\).
\end{proof}

\begin{proof}{Proof of
\Cref{prop:finite-state-action-asymptotic-expansion}}
We define
$d_N\coloneqq  
\|\ell_N-\ell_{\theta^*}\|_\infty
+
\|\mathsf{G}_N-\mathsf{G}_{\theta^*}\|_\infty $.
 \Cref{lem:finite-primitive-concentration} ensures
\(d_N=O_{\mathbb P^*}(N^{-1/2})\).
We recall that \(V^*=V_{\theta^*}^{\pi^*}\). Since
\(\overline{V}_N^{\pi^*}=r_N+\gamma \mathsf{M}_N\overline{V}_N^{\pi^*}\) and
\(V^*=r_{\theta^*}+\gamma \mathsf{M}_{\theta^*}V^*\), we have
\[
\begin{aligned}
\overline{V}_N^{\pi^*}-V^*
&=
\gamma \mathsf{M}_N(\overline{V}_N^{\pi^*}-V^*)
+
r_N-r_{\theta^*}
+
\gamma(\mathsf{M}_N-\mathsf{M}_{\theta^*})V^* .
\end{aligned}
\]
Because \(\mathsf{M}_N\) is a transition matrix,
\[
\|\overline{V}_N^{\pi^*}-V^*\|_\infty
\le
(1-\gamma)^{-1}
\big(
\|r_N-r_{\theta^*}\|_\infty
+
\gamma\|(\mathsf{M}_N-\mathsf{M}_{\theta^*})V^*\|_\infty
\big).
\]
Moreover, \(\|r_N-r_{\theta^*}\|_\infty
\le \|\ell_N-\ell_{\theta^*}\|_\infty\), and
$
\|(\mathsf{M}_N-\mathsf{M}_{\theta^*})V^*\|_\infty
\le
\|V^*\|_\infty
\|\mathsf{G}_N-\mathsf{G}_{\theta^*}\|_\infty 
$.
Hence, for some constant \(C_V<\infty\),
\begin{equation}
\label{eq:overlineVNbound}
\|\overline{V}_N^{\pi^*}-V^*\|_\infty
\le
C_Vd_N .
\end{equation}

For \(z\in\mathcal Z\) and \(u\in\U\), we define
\[
\bar{Q}_N(z,u)
\coloneqq
\ell_N(z,u)
+
\gamma
\sum_{z'\in\mathcal Z}
\mathsf{G}_N(z'|z,u)\overline{V}_N^{\pi^*}(z')
\quad \text{and} \quad 
Q^*(z,u)
\coloneqq  
\ell_{\theta^*}(z,u)
+
\gamma
\sum_{z'\in\mathcal Z}
\mathsf{G}_{\theta^*}(z'|z,u)V^*(z').
\]
Using the bound in \eqref{eq:overlineVNbound}, there 
exists \(C_Q\in[1,\infty)\) such that
\[
\max_{z\in \mathcal{Z},u \in \mathcal{U}}|\bar{Q}_N(z,u)-Q^*(z,u)|
\le
C_Qd_N .
\]

Since \(\pi^*\) is the unique Bellman minimizer at every state and
\(\mathcal Z\) and \(\mathcal U\) are finite, the set
$
\{(z,u)\in\mathcal Z\times\mathcal U:u\ne\pi^*(z)\}
$
is finite. If it is the empty set, then there is no
alternative action, and hence \(V_N^*=\overline{V}_N^{\pi^*}\) for every
\(N\). Otherwise, the  gap
\[
\Delta
\coloneqq
\min_{(z,u)\in\mathcal Z\times\mathcal U:u\ne\pi^*(z)}
\{Q^*(z,u)-Q^*(z,\pi^*(z))\}
\]
is strictly positive. For this case, let
$
E_N\coloneqq \{d_N<\Delta/(3C_Q)\}
$.
On \(E_N\), for every
\(z\in\mathcal Z\) and \(u\ne\pi^*(z)\),
\[
\begin{aligned}
\bar Q_N(z,u)-\bar Q_N(z,\pi^*(z))
\ge
Q^*(z,u)-Q^*(z,\pi^*(z))-2C_Qd_N  
\ge
\Delta-2C_Qd_N
>0 .
\end{aligned}
\]
Thus, on \(E_N\), the policy \(\pi^*\) minimizes
\(\bar Q_N(z,\cdot)\) for every \(z\in\mathcal Z\). Therefore
the fixed-policy value \(\overline{V}_N^{\pi^*}\) satisfies the Bayesian
optimal Bellman equation. By uniqueness of the fixed point of the
Bayesian Bellman operator, we have
$V_N^*=\overline{V}_N^{\pi^*}$ on $E_N$. Since
\(d_N=O_{\mathbb P^*}(N^{-1/2})\), we have
\(\mathbb P^*(E_N)\to1\).
Combined with  \Cref{lem:finite-fixed-policy-posterior-average},
we obtain  \eqref{eq:asymptotic-expansion}.
\end{proof}

\bibliography{ContextBayes-arXiv-v2.bbl}

\end{document}